\documentclass[a4paper,twoside,reqno]{amsart}
\usepackage[cp1251]{inputenc}
\usepackage{amsmath,amssymb,graphicx,color}
\usepackage[colorlinks,pdfauthor={Yu. Brezhnev},bookmarksopen=false,
            bookmarks=false,pdfwindowui=false]{hyperref}

\usepackage{hyperref}

\newcommand{\textcenter}[3][Yu.~Brezhnev]{
\pagestyle{myheadings}
\markboth{\hfill\textsc{#1}\hfill}{\hfill\textsc{#1}\hfill}
\ifdim#2mm=0mm\textwidth=\textwidth\else\textwidth=#2mm\fi
\oddsidemargin=\paperwidth%
\addtolength{\oddsidemargin}{-\textwidth}%
\addtolength{\oddsidemargin}{-2in} \oddsidemargin=0.5\oddsidemargin
\evensidemargin=\oddsidemargin
\ifdim#3mm=0mm\textheight=\textheight\else\textheight=#3mm\fi
\topmargin=\paperheight%
\addtolength{\topmargin}{-\textheight}%
\addtolength{\topmargin}{-2in}%
\addtolength{\topmargin}{-\headsep}%
\addtolength{\topmargin}{-\headheight}%
\addtolength{\topmargin}{-\footskip}%
\topmargin=0.5\topmargin}


 \textcenter{139.1}{200} 

\theoremstyle{plain}
\newtheorem{theorem}{Theorem}
\newtheorem{lemma}[theorem]{Lemma}
\newtheorem{proposition}[theorem]{Proposition}

\theoremstyle{remark}
\newtheorem{remark}{Remark}

\allowdisplaybreaks[4] 


\def~{\unskip\nobreak\hspace{0.27778em}\ignorespaces}
\def\,{\ifmmode\mskip+1.5mu\else\kern+.08333em\fi\relax}
\def\!{\ifmmode\mskip-1.5mu\else\kern-.08333em\fi\relax}
\newdimen{\FontSize} \delimiterfactor=990

\def\ds{\displaystyle}  \def\sss{\scriptscriptstyle}
\def\ie{i.\,\,e.}  \def\eg{e.\,\,g.}
\def\re{{\,\mathrm{e}}} \def\ri{{\mathrm{i}}}
\def\wpp{\wp\hbox{\smaller[1]$'$}}

\def\DEF{\mathrel{\vcenter{\hbox{$:$}}{=}}}
\def\FED{\mathrel{{=}\vcenter{\hbox{$:$}}}}

\def\smin{\mathrel{\vcenter{\hbox{\scalebox{0.7}[1]{$\scriptstyle-$}}}}}
\def\hence{=\kern-0.5em\Rightarrow}
\def\?{\textrm{\protect\footnotesize$\RED\mathchar"446$}}
\def\vphi{\mskip0mu\raisebox{-0.54ex}{\scalebox{0.6}[1]{\hbox{$-$}}}
 \mkern-7.76mu\raise0.52ex\hbox{$\varphi$}}
\def\vpsi{\mskip0.4mu\raisebox{-0.54ex}{\scalebox{0.6}[1]{\hbox{$-$}}}
 \mkern-8.82mu\raise0.52ex\hbox{$\psi$}{}}

\def\END{

\def\T{\hbox{\small $\mathrm{T}$}}
\def\bI{{\boldsymbol{I}}}
\def\bO{{\boldsymbol{\Omega}}}
\newcommand{\bdot}[1]{\mathbf{\dot{\mathnormal{#1}}}}

\begin{document}
\author[Yu.~Brezhnev]{Yurii V.~Brezhnev}
\author[S.~Lyakhovich]{Simon L.~Lyakhovich}
\author[A.~Sharapov]{Alexey A.~Sharapov}

\title[Dynamical systems defining  $\vartheta$-constants]
{Dynamical systems defining Jacobi's $\vartheta$-constants}



\thanks{The research was supported by the Federal Targeted Program
under state contracts 02.740.11.0238, \#P1337 and \#P22. The work by
SLL and {\sc AASh} was supported by the RFBR grant 09--02--00723-a and
SLL had a partial support from the RFBR grant 08--01--00737-a. SLL and
{\sc AASh} appreciate the hospitality of the Erwin Schroedinger
Institute for Mathematical Physics, Vienna.}

\keywords{Jacobi's theta-constants, Darboux--Halphen and Chazy
differential equations, Lagrangian, Hamiltonian, conserved quantities,
Poisson--Nambu structures, hypergeometric functions}

\begin{abstract}
We propose  a system of equations  that defines Weierstrass--Jacobi's
eta- and theta-constant series in a differentially closed way. This
system is shown to have a direct relationship to a little-known
dynamical system obtained by Jacobi. The classically known differential
equations by Darboux--Halphen, Chazy, and Ramanujan are the
differential consequences or reductions of these systems. The proposed
system is shown to admit the Lagrangian, Hamiltonian, and Nambu
formulations. We explicitly construct a pencil of nonlinear Poisson
brackets and complete set of involutive conserved quantities. As
byproducts of the theory, we exemplify conserved quantities for the
Ramamani dynamical system and quadratic system of Halphen--Brioschi.
\end{abstract}

\hfill {\footnotesize Journ.~Math.~Phys. (2011), {\bf52}(11),
112704(1--21)} \medskip

\maketitle\thispagestyle{empty}

\tableofcontents

\section{Introduction}
\noindent In this work we propose a description of the classical
Jacobi's $\vartheta$-constants and Weierstrass' $\eta$-function by
means of closed and Lagrangian/Hamiltonian ordinary differential
equations (ODEs). By simple transformations or reductions these
equations lead to many well-known differential systems. Among these are
the Darboux--Halphen system \cite{darboux,halphen}, some its
modifications \cite{ramanujan,ablowitz3}, the Chazy equation
\cite{conte}, and also a Jacobi system of ODEs \cite{jacobi} which has
not received mention in the modern literature in the context. For both
Jacobi's system and equations defining the $\vartheta,\eta$-series we
work out the Hamiltonian formalism and show that they admit a pencil of
(compatible) Poisson structures in the sense of Magri \cite{magri} and
formulation as the generalized Nambu mechanics \cite{Nambu} with a
certain 4-bracket.

The three Jacobi's theta-constants are defined by the classical series
$$
\vartheta_2(\tau)\DEF\re^{\frac14\pi\ri\tau}_{\mathstrut}
\sideset{}{_k}\sum_{-\infty}^\infty\!
\re^{(k^2+k)\pi\ri\tau}_{\mathstrut},\qquad
\vartheta_3(\tau)\DEF
\sideset{}{_k}\sum_{-\infty}^\infty\!
\re^{k^2\pi\ri\tau}_{\mathstrut} ,\qquad
\vartheta_4(\tau)\DEF
\sideset{}{_k}\sum_{-\infty}^\infty\!
(-1)^k\re^{k^2\pi\ri\tau}_{\mathstrut}
$$
and the Weierstrass $\eta$-function is defined by the series
$$
\eta(\tau)\DEF 2\pi^2\bigg\{ \frac{1}{24}- \sideset{}{_k}\sum_1^\infty
\frac{\re^{2k\pi\ri\,\tau}}
{(1-\re^{2k\pi\ri\,\tau})^2}\bigg\}.
$$
Here, the `time' $\tau$ is considered to be a complex variable
belonging to the upper half-plane $\mathbb{H}^+$:
$\boldsymbol{\Im}(\tau)>0$. These series appear in various problems of
theoretical physics because of their numerous and deep differential
properties \cite{ablowitz3,conte}. Let us mention some of them.

Three $\vartheta$-constant series satisfy the following differential
identities for logarithmic derivatives of their ratios:
$$
\frac{d}{d\tau}\ln\frac{\vartheta_2}{\vartheta_3}=
\frac{\pi}{4}\,\ri\,\vartheta_4^4\,,
\qquad
\frac{d}{d\tau}\ln\frac{\vartheta_3}{\vartheta_4}=
\frac{\pi}{4}\,\ri\,\vartheta_2^4\,,
\qquad
\frac{d}{d\tau}\ln\frac{\vartheta_2}{\vartheta_4}=
\frac{\pi}{4}\,\ri\,\vartheta_3^4\,.
$$
Yet another and very well-known identity is the sum of logarithmic
derivatives:
$$
\frac{\bdot\vartheta_2}{\vartheta_2}+
\frac{\bdot\vartheta_3}{\vartheta_3}+
\frac{\bdot\vartheta_4}{\vartheta_4}=\frac{3\ri}{\pi}\,\eta
$$
(dot stands for the $\tau$-derivative). If we introduce a notation for
these derivatives, say
\begin{equation}\label{xyz}
(X,Y,Z)\DEF 2\!\left(\frac{\bdot\vartheta_2}{\vartheta_2},\:
\frac{\bdot\vartheta_3}{\vartheta_3},\:
\frac{\bdot\vartheta_4}{\vartheta_4}{\sss\!}\right),
\end{equation}
then the quantities $(X,Y,Z)$ satisfy the 3rd order differential system
\begin{equation}\label{darboux}
\bdot{\!\! X}=(Y+Z)\,X-YZ\,,\qquad \bdot{\!\!
Y}=(X+Z)\,Y-XZ\,,\qquad \bdot{\! Z}=(X+Y)\,Z-XY\,,
\end{equation}
which is widely known as the Halphen system
\cite[p.~330--331]{halphen}. This system is frequently named as the
Darboux--Halphen system though Darboux himself wrote down only
differentials \cite[p.~149]{darboux}:
$$
\mathrm{C}\,(d\mathrm{A}+d\mathrm{B})=
\mathrm{B}\,(d\mathrm{A}+d\mathrm{C})=
\mathrm{A}\,(d\mathrm{B}+d\mathrm{C})\,.
$$
These can be written in the form
$$
\frac{d\mathrm{A}}{\mathrm{A}(\mathrm{B}+\mathrm{C})-
\mathrm{B}\mathrm{C}}=
\frac{d\mathrm{B}}{\mathrm{B}(\mathrm{A}+\mathrm{C})-
\mathrm{A}\mathrm{C}}=\frac{d\mathrm{C}}{\mathrm{C}
(\mathrm{A}+\mathrm{B})- \mathrm{A}\mathrm{B}}=dt
$$
which is equivalent to the system \eqref{darboux}.

Remarkable applications of Eqs.~\eqref{darboux} were initiated in the
1990's by M.~Ablowitz et all \cite{ablowitz0, ablowitz} in connection
with reductions of the self-dual Yang--Mills equations. These equations
usually provide the main physical motivation for studying both the
$\eta$, $\vartheta$-series and  allied modular objects. However,
applications go beyond the Yang--Mills theory. In succeeding years the
system appeared in the vacuum cosmological Bianchi--IX model
\cite[p.~143, 147]{conte}, \cite[p.~577]{ablowitz3}, \cite{ablowitz},
theory of 2-monopole moduli spaces \cite{hitchin}, and many other areas
of mathematical physics \cite{kiritsis}. System \eqref{darboux} has
also varieties. One of them is the Weierstrass--Halphen dynamical
system for Weierstrass' invariants $g_2^{}$, $g_3^{}$, and
$\eta$-series:
\begin{equation}\label{g2g3}
\frac{d g_2^{}}{d\tau} = \frac{\ri}{\pi}
\big(8\,g_2^{}\,\eta-12\,g_3^{}\big)\,,\qquad \frac{d
g_3^{}}{d\tau}= \frac{\ri}{\pi}\!\!
\left(12\,g_3^{}\,\eta-\frac23\,g_2^2\right),\qquad
\frac{d\eta}{d\tau}=
\frac{\ri}{\pi}\!\!\left(2\,\eta^2-\frac16\,g_2^{}\right).
\end{equation}
It is known that these invariants
$$
g_2^{}(\tau)\DEF 60\,\sum\limits_{n,\,m}
\frac{1}{(2m\tau+2n)^4}\,,\qquad
g_3^{}(\tau)\DEF 140\,\sum\limits_{n,\,m}
\frac{1}{(2m\tau+2n)^6},\qquad\mbig[1]((n,m)\ne(0,0)\mbig[1])
$$
are related to the $\vartheta$-series by the standard polynomial
formulae
\begin{equation}\label{g23}
\begin{aligned}
g_2^{}(\tau)&={\phantom{4}}\frac{\pi^4}{24}
\big\{\vartheta_2^8(\tau)+\vartheta_3^8(\tau)+\vartheta_4^8(\tau)
\big\}\,,\\[0.3em]
g_3^{}(\tau)&=\,\frac{\pi^6}{432}
\big\{\vartheta_2^4(\tau)+\vartheta_3^4(\tau) \big\}
\big\{\vartheta_3^4(\tau)+\vartheta_4^4(\tau) \big\}
\big\{\vartheta_4^4(\tau)-\vartheta_2^4(\tau) \big\}
\end{aligned}
\end{equation}
and the series themselves satisfy the well-known Jacobi identity
\begin{equation}\label{324}
\vartheta_3^4(\tau)=\vartheta_2^4(\tau)+\vartheta_4^4(\tau)\,.
\end{equation}

In different notation and (number-theoretic) definition for function
series, system \eqref{g2g3} is known as the Ramanujan system of
differential equations \cite{ramanujan,chud} for modular forms
\begin{equation}\label{E2k}
E_2(\tau)=\frac{12}{\pi^2}\,\eta(\tau)\,,\qquad
E_4(\tau)=\frac{12}{\pi^4}\,g_2^{}(\tau)\,,\qquad
E_6(\tau)=\frac{216}{\pi^6}\, g_3^{}(\tau)\,.
\end{equation}
Ramanujan's system is sometimes referred to as the Eisenstein system of
differential equations \cite{chud}, though Eisenstein himself had not
derived it \cite{eisenstein}.  Further discussions of and bibliography
to the systems mentioned above can be found in
\cite{ablowitz,ablowitz3, ablowitz2, ablowitz0,conte, guha} and
references therein.

\subsection{Motivation for the work}
Dynamical variables for all the systems above are rationally expressed
through the $\vartheta$-variables. Therefore, the inverse
transformations will involve the multi-valued functions,  as further
examples show. Recently Ablowitz, Chakravarty, and Hahn
\cite{ablowitz2} called attention to yet another instance which is more
interesting and comes from the equations for modular forms on group
$\boldsymbol{\Gamma}_0(2)$. This is the Ramamani system \cite{rama}
(Sect.~\ref{RR}); it was  also considered  in works \cite{maier,guha}.
In this case, the relation between dynamical variables and the
$\vartheta, \eta$-variables is not obvious because it is given by a
duplication of the $\tau$-argument in forms \eqref{E2k}
\cite{ablowitz2,guha}. If we make use of the duplication rules
\begin{equation}\label{dubl}
\eta(2\,\tau)=\frac12\,\eta(\tau)+\frac{\pi^2}{48}
\big\{\vartheta_3^4(\tau)+\vartheta_4^4(\tau) \big\}\,,\quad
g_2^{}(2\,\tau)=-\frac14\,g_2^{}(\tau)+\frac{5\,\pi^4}{192}\,
\big\{\vartheta_3^4(\tau)+\vartheta_4^4(\tau)\big\}^2
\end{equation}
we arrive again at substitutions of a rational type (see Sect.~\ref{RR}
for further details). Though these rules have not appeared explicitly
in the literature known to us, they can be established by standard
techniques. In a $q$-series notation these identities can be found in
Ramanujan's notebooks and (in number-theoretic notation) implicitly
have been tabulated in \cite[Table~1]{maier}. Apart from inversions of
that kind substitutions one should mention the fact that 3-dimensional
systems, \eg, \eqref{g2g3}, present generically subsystems or
\textit{reductions} of the 4-dimensional ones because differentiations
intertwine equally all the four objects $\vartheta_k$ and $\eta$. In
particular, we display here a version of relations which, besides their
symmetrical form, close differentially these objects.

\begin{proposition}\label{T1}
The canonical Jacobi's $\vartheta$-constant series satisfy the closed
differential identities upon adjoining the Weierstrass $\eta$-series:
\begin{equation}\label{var}
\begin{aligned}
\frac{d\vartheta_2}{d\tau}&=
\frac{\ri}{\pi}\bigg\{\eta+\frac{\pi^2}{12}\,
\big(\vartheta_3^4+\vartheta_4^4
\big)\bigg\}\vartheta_2\,,\\[0.4em]
\frac{d\vartheta_3}{d\tau}&=
\frac{\ri}{\pi}\bigg\{\eta+\frac{\pi^2}{12}\,
\big(\vartheta_2^4-\vartheta_4^4 \big)\bigg\}\vartheta_3\,,
\end{aligned}\qquad
\begin{aligned}
\frac{d\vartheta_4}{d\tau}&=
\frac{\ri}{\pi}\bigg\{\eta-\frac{\pi^2}{12}\,
\big(\vartheta_2^4+\vartheta_3^4
\big)\bigg\}\vartheta_4\,,\\[0.3em]
\frac{d\eta}{d\tau}&=\frac{\ri}{\pi}\bigg\{2\,\eta^2-
\frac{\pi^4}{12^2}
\big(\vartheta_2^8+\vartheta_3^8+\vartheta_4^8 \big) \bigg\}\,.
\end{aligned}
\end{equation}
\end{proposition}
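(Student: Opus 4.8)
The plan is to assemble \eqref{var} from the classical identities already recorded above, treating the three $\vartheta$-equations and the $\eta$-equation separately. For the former I would introduce the logarithmic derivatives $u_k\DEF\frac{d}{d\tau}\ln\vartheta_k$ ($k=2,3,4$) and read off a linear system for them. The three displayed identities for $\frac{d}{d\tau}\ln(\vartheta_i/\vartheta_j)$ supply the pairwise differences
$$
u_2-u_3=\tfrac{\pi}{4}\ri\,\vartheta_4^4,\qquad
u_3-u_4=\tfrac{\pi}{4}\ri\,\vartheta_2^4,\qquad
u_2-u_4=\tfrac{\pi}{4}\ri\,\vartheta_3^4,
$$
while the sum-of-logarithmic-derivatives identity supplies the trace $u_2+u_3+u_4=\frac{3\ri}{\pi}\eta$.

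Observe first that the three difference relations are not independent: adding the first two reproduces the third precisely when $\vartheta_3^4=\vartheta_2^4+\vartheta_4^4$, which is Jacobi's identity \eqref{324}. Hence two differences together with the trace form an invertible $3\times3$ system for $(u_2,u_3,u_4)$. Solving it, for instance through $3u_2=(u_2+u_3+u_4)+(u_2-u_3)+(u_2-u_4)$ with analogous combinations for $u_3$ and $u_4$, gives
$$
u_2=\frac{\ri}{\pi}\eta+\frac{\pi\ri}{12}\big(\vartheta_3^4+\vartheta_4^4\big),\quad
u_3=\frac{\ri}{\pi}\eta+\frac{\pi\ri}{12}\big(\vartheta_2^4-\vartheta_4^4\big),\quad
u_4=\frac{\ri}{\pi}\eta-\frac{\pi\ri}{12}\big(\vartheta_2^4+\vartheta_3^4\big).
$$
Multiplying each $u_k$ by $\vartheta_k$ and extracting the common factor $\frac{\ri}{\pi}$ reproduces the first three lines of \eqref{var} verbatim.

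For the $\eta$-equation I would not differentiate the defining series directly but instead borrow the last equation of the Weierstrass--Halphen system \eqref{g2g3}, namely $\frac{d\eta}{d\tau}=\frac{\ri}{\pi}\big(2\eta^2-\frac16 g_2^{}\big)$, and eliminate $g_2^{}$ through the polynomial representation \eqref{g23}. Since $\frac16 g_2^{}=\frac{\pi^4}{12^2}\big(\vartheta_2^8+\vartheta_3^8+\vartheta_4^8\big)$, this substitution turns the $g_2^{}$-term into the stated octic in the $\vartheta$-constants and yields the fourth line of \eqref{var}.

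The computation is essentially linear algebra once the classical ingredients are in hand, so the only real obstacle is the right to invoke them. If a self-contained derivation were required instead, the genuine work would lie in establishing the logarithmic-derivative identities and the $\eta$-equation from the series themselves — that is, in the heat-equation and modularity properties of the $\vartheta$- and $\eta$-functions — but, as these identities are already recorded in the introduction, the proof here reduces to the algebraic assembly described above.
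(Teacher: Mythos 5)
Your proof is correct, but for the three $\vartheta$-equations it follows a genuinely different route from the paper's. The paper computes the theta-derivatives \emph{through} the derivatives of $g_2$ and $g_3$: one differentiates the polynomial relations \eqref{g23}, equates the results with the right-hand sides of the Weierstrass--Halphen system \eqref{g2g3}, and then simplifies with Jacobi's identity \eqref{324}. You instead bypass $g_2,g_3$ entirely for the first three equations, solving the elementary linear system formed by the pairwise logarithmic-derivative identities together with the trace identity $u_2+u_3+u_4=\frac{3\ri}{\pi}\eta$; the combinations $3u_k=(\text{trace})\pm(\text{differences})$ reproduce \eqref{var} verbatim, and your observation that mutual consistency of the three difference relations is exactly \eqref{324} pinpoints where Jacobi's identity enters. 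For the fourth equation the two arguments coincide: both substitute $g_2$ from \eqref{g23} into the Ramanujan equation for $\dot\eta$ in \eqref{g2g3}. What your route buys is economy and transparency; note, however, that the right-hand sides obtained this way are only determined modulo multiples of $\vartheta_3^4-\vartheta_2^4-\vartheta_4^4$, and it is your specific choice of combinations that lands on the symmetric form \eqref{var} --- precisely the ambiguity the paper later exploits in passing from \eqref{var} to the integrable modification \eqref{var2}. What the paper's route buys is that it exhibits \eqref{var} directly as a rewriting of the system \eqref{g2g3}, reinforcing the paper's theme that all these modular systems are point transforms or reductions of one another. Both proofs rest on classical identities quoted without derivation (yours on the ratio and trace identities of the introduction, the paper's on \eqref{g2g3} and \eqref{g23}), so they sit at the same level of rigor.
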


Derivation of these formulae uses  computation of the theta-derivatives
through the derivatives of $g_2^{}$, $g_3^{}$, that is
Eqs.~\eqref{g2g3} and \eqref{g23}, followed by applying the symmetrical
identity \eqref{324}.

To all appearances, these identities, direct consequences of standard
relations as they are,  do not appear  explicitly in so extensive
literature on theta-functions though we found first three of them in
Appendix A to  monograph \cite{kiritsis}.

On the other hand, considering \eqref{var} as a dynamical system, its
integration, as we shall see, encounters serious difficulties.
Moreover, system \eqref{darboux}, under the definition \eqref{xyz}, is
not a consequence of Eqs.~\eqref{var} but holds only upon restricting
to the constant level surface \eqref{324}. To put it differently, the
modes of embedding functions $\vartheta,\eta$ into differential systems
are not unique and Eqs.~\eqref{var} require modifications. All this
will be the subject matter of further consideration.

For the reasons given above it is essential to have a comprehensive
description for differential properties of the canonical
$\vartheta,\eta$-series as such. In particular, it is of interest to
find a Lagrangian and Hamiltonian formulation for these systems. This
would provide fresh insight into properties of the theta-constants.
This circle of questions, as applied to an equivalent of the system
\eqref{g2g3}, is addressed in the work \cite{chud} by D.~Chudnovsky \&
G.~Chudnovsky and, to the best of our knowledge, this is the first
paper\footnote{See also important comments on p.~5709--5711 of
\cite{nutku} concerning the Darboux--Halphen system \eqref{darboux} and
its relation to Euler's equations and the Lotka--Volterra system; in
the same place the detailed comments on Poisson structures for the
3-dimensional system \eqref{darboux} are presented. In work
\cite{chakr} their construction by means of multivalued integrals is
discussed and a generalization on Nambu's brackets is proposed.} where
the problem of Hamiltonian treatment for dynamical systems of modular
type was raised. These authors proposed a 4th order differential system
\cite{chud} and its reduction to equations of the order 3. Although
their Hamilton function is rather ingenious and correct, the proposed
reduction for system \eqref{g2g3} is not preserved; in notation of work
\cite{chud} on p.~111 the reduction is defined by the constraint
$\lambda=1$. In other words, this reduction is satisfied only by a
trivial solution. It should also be noticed that system \eqref{g2g3}
and its varieties have a nice interpretation as a halfway between
integrable systems and the ones with a chaotic behavior. However, we do
not touch on this kind of problems here because questions of dynamics
and transitions from `exactly solvable but not completely integrable'
flows to the ergodic ones are the main subject of the work \cite{chud}.
In the same place the extended bibliography is given.

\subsection{The paper content}
The subsequent material is organized as follows. In the next section
(Sect.~\ref{S2}) we discuss the correlation between identities like
\eqref{var} and a little-known Jacobi dynamical system for Legendre's
complete elliptic integrals. We give explanations as to why the change
from differential $\vartheta,\eta$-identities to those that should be
thought of as \textit{defining ODEs} is not a trivial question. We
write down the simplest version of such (`integrable') ODEs.
Section~\ref{S5} is technical; it is devoted to explicit integration of
system~\eqref{main} and Jacobi's system~\eqref{ABab}. Method of
solution invokes standard Legendre's and modular techniques and we
present results in both of these forms: the `linear' $k$- and
`nonlinear' (modular) $\tau$-representation. In Sect.~\ref{S5.5} we
explain how these techniques can be exploited in order to derive the
transcendental multivalued integrals. In Sect.~\ref{S3} we exhibit
explicitly  these objects for all of the systems mentioned above and
use them when constructing Lagrangians and the action functional.
Complete Hamiltonian formulation to the systems under study is
expounded in Sect.~\ref{S4}. The found Poisson structures turn out to
be non-obvious (none are simplectic) and may form compatible pencils;
we also describe the genesis of a rational degenerate Poisson bracket
from a Nambu 4-bracket and possible transitions between various Poisson
brackets. Section~\ref{Oh} contains a generalization; we complete the
theory for the Halphen--Brioschi quadratic ODEs. The last
section~\ref{S5.1} (Appendix) contains some historical remarks on
Jacobi's system.

\section{ODEs defining $\vartheta$-constants\label{S2}}
\subsection{On symmetrical system \eqref{var}\label{S2.1}}
As we mentioned above all the varieties of dynamical systems under
consideration are algebraically related to each other. In this respect
equations \eqref{var} stand out because this system alone represents
the $\eta,\vartheta$-constants. However, point transformations between
dynamical variables are not unique and resulting ODEs for
$\vartheta,\eta$-variables may contain parameters. In connection with
this ambiguity  it is of interest to consider an elegant dynamical
system which was derived by Jacobi. In Jacobi's record \cite{jacobi} it
is as follows:
\begin{equation}\label{ABab}
\left\{
\begin{aligned}
\frac{\partial A}{\partial h}&=2\,A^2B\,,\\[0.3em]
\frac{\partial B}{\partial h}&=b\,A^3\,,
\end{aligned}\qquad\quad
\begin{aligned}
\frac{\partial a}{\partial h}&=-16\,b\,A^2\,,\\[0.3em]
\ds\frac{\partial b}{\partial h}&=a\,b\,A^2\,,
\end{aligned}
\right.
\end{equation}
where $h=\frac14\,\pi\,\ri\,\tau$ and  the restriction
\begin{equation}\label{ab16}
a^2=16\,(1-2\,b)
\end{equation}
is assumed to be imposed. All the information concerning this system
(including solution) has been detailed in the next section and Appendix
(Sect.~\ref{S5.1}) contains additional comments on original motivation
of Jacobi. Jacobi deduced Eqs.~\eqref{ABab} as a set of differential
identities between classical objects of Legendre's `elliptic theory'
\cite{jacobi,halphen,WW}:
\begin{equation}\label{KK}
K(k)=\int\limits_{0\,}^{\,\,1}\!\!\frac{d\lambda}
{\sqrt{(1-\lambda^2)(1-k^2\,\lambda^2)}}\,,\qquad
K'(k)=\int\limits_k^{\,\,1}\!\!\frac{d\lambda}
{\sqrt{(1-\lambda^2)(\lambda^2-k^2)}}\,,
\end{equation}
\begin{equation}\label{EE}
E(k)=\int\limits_{0\,}^{\,\,1}\!
\sqrt{\frac{1-k^2\,\lambda^2}{1-\lambda^2}}\,d\lambda\,,\qquad
E'(k)=\int\limits_{0\,}^{\,\,1}\!
\sqrt{\frac{1-(1-k^2)\,\lambda^2}{1-\lambda^2}}\,d\lambda\,.
\end{equation}
A simple computation, based on the $\vartheta,\eta$-representations of
objects \eqref{KK}--\eqref{EE} appearing in Jacobi's definition  of
variables $\{A,B,a,b\}$---this is Eqs.~\eqref{A1}---shows that
\begin{equation}\label{point1}
A=\vartheta_3^2\,,\quad B=\frac{4}{\pi^2\,\vartheta_3^2}\bigg\{\eta
+\frac{\pi^2}{12}\,\big(\vartheta_2^4-\vartheta_4^4\big)\bigg\},
\quad a=4-8\,\frac{\vartheta_2^4}{\vartheta_3^4}\,,\quad
b=2\,\frac{\vartheta_2^4}{\vartheta_3^4}\,
\frac{\vartheta_4^4}{\vartheta_3^4}\,.
\end{equation}

Now, if we drop out the fix constraint \eqref{ab16} and consider
\eqref{point1} just as a point change in Eqs.~\eqref{ABab}, we shall
not arrive at symmetrical system \eqref{var}. We may also insert into
the change \eqref{point1} some parameters, say
\begin{equation}\label{tmp}
a=4-\boldsymbol{\alpha}\,\frac{\vartheta_2^4}{\vartheta_3^4}\,,\qquad
b=\boldsymbol{\beta}\,\frac{\vartheta_2^4}{\vartheta_3^4}\,
\frac{\vartheta_4^4}{\vartheta_3^4}\,,
\end{equation}
and yield different forms to resulting ODEs but we never get the system
\eqref{var} in this way. (Converse is of course also true: symmetrical
system \eqref{var} does not entail Jacobi's equations \eqref{ABab}).
Any of such ODEs will be integrable in terms of $\vartheta,\eta$-series
since they were obtained from \eqref{ABab} by coordinate changes of
dynamical (phase) variables. The changes are generally algebraic, \ie,
multi-valued in both directions. In this respect Jacobi's system
\eqref{ABab} is not the best variant because choice of the phase
variables, in this case, would lead to replacing the `simple' $K(k)$
with the `cumbrous' $K\! \mbig[6](\!\sqrt{\frac12-\frac12\frac{a}
{\sqrt{a^2+32b}}}\,\mbig[6])$ (see Proposition~\ref{T8} further below).
In other words, the search for a representative defining
Jacobi--Weierstrass' series by a system of ODE's is not a trivially
solvable problem and we need to choose, in some sense, `natural and
optimal' version for such a system (call it canonical one). It must
reflects the principal property of the series, namely, the property of
being uniformizing for other algebraic versions \eqref{darboux},
\eqref{g2g3},  \eqref{ABab}, or the like.

For this purpose, however,  symmetrical form \eqref{var} is apt to be
not a good candidate because it is not amenable to integration and we
failed to find out its complete integral. That such a strong
distinction between systems is inherent in the nature of the case
\eqref{var} will be apparent from the consideration of their algebraic
integrals as algebraic curves in homogeneous coordinates
\mbox{$\vartheta_2\!:\!\vartheta_3\!:\!\vartheta_4$}.

\begin{proposition}\label{P1}
The identities \eqref{var}, being considered as a dynamical system,
have an algebraic integral $U$ given by the following rational function
of $\vartheta$'s:
\begin{equation}\label{Ivar}
U\!\cdot\vartheta_2^4\,\vartheta_3^4\,\vartheta_4^4=
\big(\vartheta_3^4-\vartheta_2^4-\vartheta_4^4\big)^3\,.
\end{equation}
This integral generalizes Jacobi's identity \eqref{324} if $U\ne0$.
\end{proposition}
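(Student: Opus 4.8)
The plan is to show that the quantity
$$
U=\frac{\big(\vartheta_3^4-\vartheta_2^4-\vartheta_4^4\big)^3}
{\vartheta_2^4\,\vartheta_3^4\,\vartheta_4^4}
$$
is a first integral of system \eqref{var}, i.e.\ that $\bdot U=0$ along any trajectory. The natural strategy is to introduce the logarithmic derivatives \eqref{xyz} and reduce the problem to a purely algebraic identity. First I would read off from \eqref{var} the logarithmic derivatives of the fourth powers:
$$
\frac{d}{d\tau}\ln\vartheta_2^4=
\frac{4\ri}{\pi}\Big\{\eta+\tfrac{\pi^2}{12}\big(\vartheta_3^4+\vartheta_4^4\big)\Big\},
$$
and similarly for $\vartheta_3^4$ and $\vartheta_4^4$ with the appropriate sign patterns. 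It is then convenient to abbreviate $p\DEF\vartheta_2^4$, $q\DEF\vartheta_3^4$, $r\DEF\vartheta_4^4$, so that the denominator is $pqr$ and the numerator is $(q-p-r)^3$.

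Next I would compute $\bdot U/U$ as a difference of logarithmic derivatives:
$$
\frac{\bdot U}{U}=3\,\frac{\bdot{\overline{(q-p-r)}}}{q-p-r}
-\frac{\bdot p}{p}-\frac{\bdot q}{q}-\frac{\bdot r}{r}.
$$
The second group is the sum of logarithmic derivatives of $p,q,r$; when the three expressions from \eqref{var} are added, the quartic terms cancel in pairs and one is left with $\tfrac{12\ri}{\pi}\,\eta$, so that $\tfrac{\bdot p}{p}+\tfrac{\bdot q}{q}+\tfrac{\bdot r}{r}=\tfrac{12\ri}{\pi}\,\eta$. For the first group I would write $\bdot q-\bdot p-\bdot r$ using the three derivative formulas and factor out $\tfrac{\ri}{\pi}$; the $\eta$-terms assemble into $\eta\,(q-p-r)$, while the quartic terms must be shown to assemble likewise into a constant multiple of $(q-p-r)$. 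Concretely one needs the algebraic identity that the quartic combination arising in $\bdot q-\bdot p-\bdot r$ equals $\tfrac{\pi^2}{3}\,(q-p-r)$ up to the $\eta$-part, so that $\bdot{\overline{(q-p-r)}}/(q-p-r)=\tfrac{4\ri}{\pi}\,\eta$. Substituting both pieces back gives $\bdot U/U=3\cdot\tfrac{4\ri}{\pi}\,\eta-\tfrac{12\ri}{\pi}\,\eta=0$, as required.

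The routine but essential step is the verification of that quartic identity: collecting the coefficients of $p,q,r$ inside the bracketed factors of \eqref{var}, one must check that $(q+r)+(p-r)-(p+q)$-type combinations (weighted by the signs from the three equations) reduce the quartic part of $\bdot q-\bdot p-\bdot r$ exactly to a multiple of the linear form $q-p-r$. The main obstacle, and the only place where care is needed, is bookkeeping the signs in the three bracket expressions of \eqref{var} so that the quartic terms do not merely cancel but reproduce the factor $q-p-r$ proportionally; once this proportionality is confirmed the cancellation against the $\eta$-sum is immediate. Finally, to justify the concluding sentence I would note that setting $U=0$ forces $q-p-r=0$, i.e.\ $\vartheta_3^4=\vartheta_2^4+\vartheta_4^4$, which is precisely Jacobi's identity \eqref{324}; thus \eqref{324} is the zero level set of the integral $U$, and a nonzero value of $U$ gives a one-parameter family of invariant algebraic curves generalizing it.
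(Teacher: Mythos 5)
Your strategy is the right one, and it is essentially the paper's: Proposition~\ref{P1} is stated there without a displayed proof, the intended argument being exactly the direct verification you set up, namely passing to $p=\vartheta_2^4$, $q=\vartheta_3^4$, $r=\vartheta_4^4$, observing from \eqref{var} that
$$
\frac{\bdot p}{p}+\frac{\bdot q}{q}+\frac{\bdot r}{r}=\frac{12\,\ri}{\pi}\,\eta\,,
$$
and comparing with the logarithmic derivative of $(q-p-r)^3$. However, the one identity you defer as the ``routine but essential step'' is stated incorrectly, and in the form you state it the proof would fail rather than close. Writing out the three equations gives
$$
\bdot q-\bdot p-\bdot r=\frac{4\,\ri}{\pi}\,\eta\,(q-p-r)
+\frac{\ri\,\pi}{3}\,\big[(p-r)\,q-(q+r)\,p+(p+q)\,r\big]\,,
$$
and the bracketed quartic combination is \emph{identically zero}: $pq-qr-pq-pr+pr+qr=0$. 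It does not equal $\tfrac{\pi^2}{3}(q-p-r)$, nor could it: a homogeneous quadratic in $(p,q,r)$ cannot coincide with a nonzero homogeneous linear form. Worse, your claim contradicts your own conclusion: if the quartic part really assembled into a nonzero constant multiple of $(q-p-r)$, then $\frac{d}{d\tau}\ln(q-p-r)$ would acquire an additive constant, becoming $\frac{4\ri}{\pi}\eta+\frac{\ri\pi}{3}$, and your final balance would yield
$$
\frac{\bdot U}{U}=3\left(\frac{4\,\ri}{\pi}\,\eta+\frac{\ri\,\pi}{3}\right)-\frac{12\,\ri}{\pi}\,\eta=\ri\,\pi\neq 0\,,
$$
so $U$ would not be conserved. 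The proof works only because the ``constant multiple'' is $0$, not $\tfrac{\pi^2}{3}$.

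With that single correction everything closes exactly as you intend: $\frac{d}{d\tau}\ln(q-p-r)=\frac{4\ri}{\pi}\,\eta$, hence $\frac{d}{d\tau}\ln(q-p-r)^3=\frac{12\ri}{\pi}\,\eta=\frac{d}{d\tau}\ln(pqr)$, giving $\bdot U\equiv 0$. Your reading of the final sentence of the proposition is also correct: $U=0$ is precisely the locus $\vartheta_3^4=\vartheta_2^4+\vartheta_4^4$, i.e.\ Jacobi's identity \eqref{324}, while the nonzero level sets of $U$ are the invariant algebraic surfaces generalizing it.
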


Turning now the restriction \eqref{ab16} into algebraic integral (see
Eq.~\eqref{I} in Sect.~\ref{S5}), we observe that equation \eqref{I},
under generalization \eqref{tmp}, has genus 9, whereas integral
\eqref{Ivar} is a curve of genus 19. The best we have succeed in
solution of  system \eqref{var} is its partial resolution in terms of
elliptic functions. In a nutshell, this procedure is as follows.

Let us change notation $U\mapsto U^2$ and rewrite integral \eqref{Ivar}
in  form of the elliptic curve\footnote{An analogous transformation to
the fourth powers of $\vartheta$'s in integral \eqref{I} for
Eqs.~\eqref{ABab} leads to a zero genus curve and no elliptic functions
appear in this case (see Sect.~\ref{S5.4}).}
$$
2\,U^2\,\boldsymbol{x}\,\boldsymbol{y}=(\boldsymbol{y}-
\boldsymbol{x}-2)^3\,,\qquad
\boldsymbol{x}=2\,\frac{\vartheta_2^4}{\vartheta_4^4}\,,\quad
\boldsymbol{y}=2\,\frac{\vartheta_3^4}{\vartheta_4^4}\,.
$$
Hence it follows that the pair $(\boldsymbol{x},\boldsymbol{y})$ is
parametrized by Weierstrass' $(\wp,\wpp)$-functions and this curve can
be transformed into the canonical Weierstrassian form
$$
\wpp(\mathfrak{u})^2=4\,\wp^3(\mathfrak{u})-g_2^{}\,\wp(\mathfrak{u})
-g_3^{}\,.
$$
The computation is rather simple and we obtain
\begin{equation}\label{xy}
\boldsymbol{x}=\frac{1}{U}\,\wpp(\mathfrak{u})-\wp(\mathfrak{u})+
\frac{U^2}{12}-1\,,\qquad
\boldsymbol{y}=\frac{1}{U}\,\wpp(\mathfrak{u})+\wp(\mathfrak{u})-
\frac{U^2}{12}+1\,,
\end{equation}
where constants $g_2^{}$, $g_3^{}$ are expressed through the integral
$U$:
$$
g_2^{}=\frac{U^4}{12}-2\,U^2\,,\qquad
g_3^{}=-\frac{U^6}{216}+\frac{U^4}{6}-U^2\,.
$$
Therefore formulae \eqref{xy} substituted into Eqs.~\eqref{var} must
cause this system to become a \mbox{$\tau$-evo}\-lution of the
uniformizer $\mathfrak{u}=\mathfrak{u}(\tau)$. Indeed, after some
algebra we derive that
$$
\frac{36\,U}{\pi\,\ri}\!\cdot\!\frac{1}{\vartheta_4^4}
\frac{d\,\mathfrak{u}}{d\tau}=12\,\wp(\mathfrak{u})-U^2
$$
and therefore
$$
\int\!\!\frac{d\,\mathfrak{u}}{12\,\wp(\mathfrak{u})-U^2}=
\frac{\pi\,\ri}{36\,U} \int\!\!\vartheta_4^4\,d\tau+\mbox{const}\,.
$$
The left hand side of this equation is easily integrated because
$$
\frac{12\,U}{12\,\wp(\mathfrak{u})-U^2}=
\zeta(\mathfrak{u}-\varkappa)-\zeta(\mathfrak{u}+\varkappa)+
2\,\zeta(\varkappa)\,,
$$
where $12\,\wp(\varkappa)=U^2$, $\wpp(\varkappa)=\pm U$, and
$\zeta(\mathfrak{u})$, $\sigma(\mathfrak{u})$ are the standard
Weierstrassian functions associated with the basis
$\wp(\mathfrak{u}),\wpp(\mathfrak{u})$ \cite{halphen,WW}. We  get
$$
\frac{3}{\pi\ri}\ln\!\left\{\frac{\sigma(\mathfrak{u}-\varkappa)}
{\sigma(\mathfrak{u}+\varkappa)}\,\re^{2\zeta(\varkappa)\mathfrak{u}}
\right\}
=\int\!\!\vartheta_4^4\,d\tau+\mbox{const}
$$
but integral in the right hand side requires  further integration of
the system. This last step  is unknown.

If equations \eqref{var} are indeed non-integrable then situation is a
manifestation of the mere fact that the differential identity for a
function and differential equation are not one and the same. The
function $u={-}\frac{d}{dz}\ln\!\left(z^2+\frac4z \right)$ solves the
equation $u''=2\,u^3+z\,u-2$ whose general integral is, however, not
representable in terms of any known functions or integrals of them;
this is the 2nd Painlev\'e transcendent \cite{conte}. Here is a less
trivial example. The function
$u=\frac{d}{dz}\ln\!\left\{\mathrm{Ai}(z)+a\,\mathrm{Bi}(z) \right\}$
contains, like our $\vartheta,\eta$-solutions, special functions and a
free constant; functions $\mathrm{Ai}(z)$, $\mathrm{Bi}(z)$ satisfy the
Airy equation $\psi''=z\,\psi$. Here, we again arrive at the
\mbox{$\mathrm{P}_{\!2}$-transcendent} $u''=2\,u^3-2\,z\,u+1$.

\subsection{An integrable modification of system  \eqref{var}}
Returning to the question of canonical representative for ODEs defining
$\vartheta,\eta$-series, we choose the following modification of
equations \eqref{var}:
\begin{equation}\label{var2}
\begin{aligned}
\frac{d\vartheta_2}{d\tau}&=
\frac{\ri}{\pi}\bigg\{\eta+\frac{\pi^2}{12}\,
\big(\vartheta_3^4+\vartheta_4^4
\big)\bigg\}\vartheta_2\,,\\[0.4em]
\frac{d\vartheta_3}{d\tau}&=
\frac{\ri}{\pi}\bigg\{\eta+\frac{\pi^2}{12}\,
\big(\vartheta_3^4-2\,\vartheta_4^4 \big)\bigg\}\vartheta_3\,,
\end{aligned}\qquad
\begin{aligned}
\frac{d\vartheta_4}{d\tau}&=
\frac{\ri}{\pi}\bigg\{\eta-\frac{\pi^2}{12}\,
\big(2\,\vartheta_3^4-\vartheta_4^4
\big)\bigg\}\vartheta_4\,,\\[0.4em]
\frac{d\eta}{d\tau}&=\frac{\ri}{\pi}\bigg\{2\,\eta^2-\frac{\pi^4}{72}
\big(\vartheta_3^8-\vartheta_3^4\,\vartheta_4^4+\vartheta_4^8 \big)
\bigg\}.
\end{aligned}
\end{equation}
Comprehensive explanation as to why the `defining
$\vartheta,\eta$-equations' should have such a form has been detailed
in work \cite{br2}. System \eqref{var2} has even an (integrable)
extension which is described in the same place. It is of interest to
observe  that all the previous dynamical systems contain in effect only
\textit{squares} of $\vartheta$-constants. For this reason, in the
sequel it will be convenient to renormalize variables $\vartheta,\eta$
and adopt the following notation:
\begin{equation}\label{def}
x=\sqrt{\frac{\pi\ri}{6}}\,\vartheta_2^2\,,\qquad
y=\sqrt{\frac{\pi\ri}{6}}\,\,\vartheta_3^2\,,\qquad
z=\sqrt{\frac{\pi\ri}{6}}\,\,\vartheta_4^2\,,\qquad
u=\frac{2\,\ri}{\pi}\,\eta\,.
\end{equation}
Then Eqs.~\eqref{var2} acquire the form
\begin{equation}\label{main}
\begin{aligned}
\bdot x&=(u+y^2+z^2)\,x\,,\\[0.3em]
\bdot y&=(u+y^2-2\,z^2)\,y\,,
\end{aligned}
\qquad\qquad
\begin{aligned}
\bdot z&=(u-2\,y^2+z^2)\,z\,,\\[0.3em]
\bdot u&=u^2-y^4+y^2z^2-z^4\,,
\end{aligned}
\end{equation}
which, along with the Jacobi system \eqref{ABab}, will be the main
subject of further analysis. Apart from simplicity and the symmetry
$y\rightleftarrows {\pm} z$, there are some additional properties
justifying the study of canonical system \eqref{main}.

First of all, the function $u$, independently of $(x,y,z)$, satisfies
the famous Chazy equation
$$
\dddot{\smash[b]{u}}=6\,(2\,u\,\ddot u-3\,\dot u^2)\,,
$$
(proof is a direct calculation) which cannot be said of $\eta$-solution
to the symmetrical version \eqref{var}. For the latter, the function
$\frac{2\ri}{\pi}\,\eta$ solves this equation only if the $U$-integral
\eqref{Ivar} is equal to zero  (consequence of Proposition~\ref{P1}).
Similarly, functions $y$ and $z$ also satisfy a third (not fourth)
order ODE. This is the known Jacobi \mbox{$C$-equation} \eqref{C}
\cite[p.~186]{jacobi}:
\begin{equation}\label{CC}
C^4(\ln C^3 C_{\tau\tau})_\tau^2=
16\,C^3 C_{\tau\tau}+36\,,\qquad C=\frac 1y\,\mbox{\ \ or\ \ }
\frac1z\,.
\end{equation}
The above mentioned symmetry involves only functions $y$ and $z$ other
than the function $x$. Therefore  general solution $1/x(\tau)$ does not
satisfy this Jacobi's equation. Equations \eqref{main} entail that
functions $x(\tau)$ and $x(\tau)$ times a constant satisfy a common
ODE. Hence, making the transformation $C\mapsto \mathrm{const}\cdot C$
in \eqref{CC}, one infers  that the solution $1/x(\tau)$ satisfies
equation \eqref{CC} wherein 36 should be replaced by a free constant.
One easily derives
$$
\tilde C^4(\ln \tilde C^3 \tilde C_{\tau\tau})_\tau^2-
16\,\tilde C^3 \tilde C_{\tau\tau}=
\bigg(6\,\frac{y^2-z^2}{x^2}\bigg)^{\!\!2},\qquad \tilde C\DEF\frac 1x
$$
but right hand side of this equation is a constant indeed. Explanation
to this fact will be apparent from Sect.~\ref{IM} wherein we give a
complete integral to the system \eqref{main}. Thus the function $\tilde
C$ satisfies the 4th order ODE
$$
\left(\tilde C^4(\ln \tilde C^3 \tilde C_{\tau\tau})_\tau^2\right)_\tau
=\left(16\,\tilde C^3 \tilde C_{\tau\tau}\right)_\tau
$$
which is checked by a straightforward substitution.

\section{Explicit solutions and technicalities\label{S5}}
At first, let us integrate Jacobi's system. From \eqref{ABab} it
follows that $a\,\partial a=-16\,\partial b$ and this equation yields
an algebraic integral that replaces Jacobi's restriction \eqref{ab16}:
\begin{equation}\label{I}
I^2=a^2+32\,b\qquad\Rightarrow\qquad \bdot I\equiv 0\,.
\end{equation}
Therefore $b$ is expressed via the function $a$ which in turn satisfies
a simple differential consequence of \eqref{ABab}, namely, the 3rd
order equation
$$
\frac{a_{\smash{\mathit{hhh}}}^{}}
{\pow{a}{h}{3}}-\frac32\,
\frac{\pow{a}{\mathit{hh}}{2}}{\pow{a}{h}{4}}=
-\frac12\,\frac{a^2+3\,I^2}{(a^2-I^2)^2}\,.
$$
This is a variety of the standard differential equation for Legendre's
modulus $\lambda\DEF k^2(\tau)$:
\begin{equation}\label{lambda}
\frac{\lambda_{\tau\tau\tau}}{\pow{\lambda}{\tau}{3}}-
\frac32\,\frac{\pow{\lambda}{\tau\tau}{2}}
{\pow{\lambda}{\tau}{4}}=
-\frac12\,\frac{\lambda^2-\lambda+1}{\lambda^2(\lambda-1)^2}\,.
\end{equation}
It immediately follows that there is bound to be a linear fractional
change  between variables $a$ and $\lambda$ transforming
Eq.~\eqref{lambda} into equation for $a$ and vice versa. This simple
computation gives
$$
\lambda=\frac{I-a}{2\,I}\,.
$$
Using the well-known $\vartheta$-constant representation for function
$\lambda(h)$ \cite{WW}, we get
\begin{equation}\label{ak}
a=I-2\,I\,\frac{\vartheta_2^4}{\vartheta_3^4}\!
\Big(\mfrac{\alpha\,h+\beta}{\gamma\,h+\delta}\Big),
\end{equation}
where $\{\alpha,\beta,\gamma,\delta\}$ are free constants with
$\alpha\,\delta\ne\beta\,\gamma$. The further integration for the
variables $\{A$, $B\}$ can be continued in two ways. The first one is
to make use of rules for differential computations \eqref{var2} of the
$\vartheta$-series. Applying them to just found expressions for $a(h)$
and $b(h)$, we get expressions for $A(h)$, $B(h)$. The second way is to
linearize the system because any Schwarz's equation is known to be
related to a certain linear ODE. We shall give solutions both in $h$-
and $k$-representations.

\subsection{Associated linear ODEs\label{S5.2}}
Using \eqref{ak}, we have the obvious transformations between pairs
$(a,b)$ and $(k,I)$:
\begin{equation}\label{ab}
a=I-2\,I\,k^2\,,\qquad 8\,b=I^2\,k^2(1-k^2)\,.
\end{equation}
This allows us to bring \eqref{ABab} into the form
\begin{equation}\label{kI}
\bdot A=2\,A^2B\,,\qquad
\bdot B=\frac18\,I^2\,k^2(1-k^2)A^3\,,\qquad
\bdot{\! k}=\frac{1}{2}\,I\,k\,(1-k^2)\,A^2\,,\qquad
\bdot I=0\,,
\end{equation}
where we let the dot above a symbol denote an $h$-derivative. We regard
this system of equations as an intermediate  equivalent of Jacobi's
system \eqref{ABab} because of its  relation to linear ODEs. Indeed, as
it follows from \eqref{kI}, the quantities $A$ and $B$, as functions of
$k$, satisfy the two linear equations
\begin{equation}\label{tk}
\frac{dA}{dk}=\frac4I\frac{1}{(1-k^2)\,k}\,B\,,\qquad
\frac{d B}{dk}=\frac I4\, k\, A
\end{equation}
and their consequences
\begin{equation}\label{tk2}
k\,(k^2-1)\,A_{\mathit{kk}}+(3\,k^2-1)\,A_k+k\,A=0\,,\qquad
k\,(k^2-1)\,B_{\mathit{kk}}-(k^2-1)\,B_k+k\,B=0\,.
\end{equation}
Since $k$ is Legendre's modulus, it is naturally to expect that these
ODEs are integrable in terms of functions \eqref{KK}--\eqref{EE}.

\begin{proposition}\label{P5}
Canonical Legendre's elliptic integrals \textup{\eqref{KK}--\eqref{EE}}
are differentially closed:
%
%
\begin{equation}\label{linear}
\begin{aligned}
\frac{dK}{dk}&=-\frac{K}{k}-\frac{E}{(k^2-1)k}\,,\\[0.3em]
\frac{dE}{dk}&=-\frac{K}{k}+\frac{E}{k}\,,
\end{aligned}\qquad\qquad
\begin{aligned}
\frac{dK'}{dk}&=\frac{kK'}{1-k^2}+\frac{E'}{(k^2-1)k}\,,\\[0.3em]
\frac{dE'}{dk}&=\frac{kK'}{1-k^2}+\frac{kE'}{k^2-1}\,.
\end{aligned}
\end{equation}
This system, being considered as  a dynamical one, has the general
solution
$$
\begin{aligned}
K&=\boldsymbol{\alpha} K(k)-\boldsymbol{\beta} K'(k)\,,\\[0.3em]
E&=\boldsymbol{\alpha} E(k)+\boldsymbol{\beta}\big[E'(k)-K'(k)\big]\,,
\end{aligned}\qquad
\begin{aligned}
K'&=\boldsymbol{\gamma} K(k)+\boldsymbol{\delta} K'(k)\,,\\[0.3em]
E'&=\boldsymbol{\delta} E'(k)+\boldsymbol{\gamma}\big[K(k)-E(k)\big]\,,
\end{aligned}
$$
where $\{\boldsymbol{\alpha},\boldsymbol{\beta},\boldsymbol{\gamma},
\boldsymbol{\delta}\}$ are free constants.
\end{proposition}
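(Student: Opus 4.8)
The plan is to treat Proposition~\ref{P5} in two stages: first establish the four closure identities \eqref{linear}, and then integrate the resulting linear system. The key structural observation is that \eqref{linear} \emph{decouples} into two independent homogeneous $2\times2$ systems, one in the pair $(K,E)$ and one in the pair $(K',E')$, with neither pair feeding into the other. Consequently each block has a two-dimensional solution space (away from the singular moduli $k=0,1$), so the full system carries a four-parameter general solution, matching the four constants $\boldsymbol{\alpha},\boldsymbol{\beta},\boldsymbol{\gamma},\boldsymbol{\delta}$.

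For the closure identities I would differentiate the integral representations \eqref{KK}--\eqref{EE} under the integral sign. The formula $dE/dk=(E-K)/k$ drops out almost immediately: differentiating $E$ produces $-k\int_0^1\lambda^2[(1-\lambda^2)(1-k^2\lambda^2)]^{-1/2}\,d\lambda$, and the reduction $k^2\int_0^1\lambda^2[(1-\lambda^2)(1-k^2\lambda^2)]^{-1/2}\,d\lambda=K-E$ finishes it. The formula for $dK/dk$ is the genuinely computational step: differentiating $K$ yields an integrand with $(1-k^2\lambda^2)^{3/2}$ in the denominator, and one integration by parts (integrating the total derivative of $\lambda\sqrt{1-k^2\lambda^2}/\sqrt{1-\lambda^2}$ over $[0,1]$) is needed to reduce it back to $K$ and $E$, giving $dK/dk=E/[k(1-k^2)]-K/k$, i.e. the first line of \eqref{linear}. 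For the complementary pair I would avoid a second integration by parts and instead use $K'(k)=K(\sqrt{1-k^2})$, $E'(k)=E(\sqrt{1-k^2})$: applying the chain rule with $d\sqrt{1-k^2}/dk=-k/\sqrt{1-k^2}$ to the already-proven $(K,E)$-identities, together with $(1-k^2)-1=-k^2$, reproduces the two right-hand identities of \eqref{linear}. I expect this $dK/dk$ reduction to be the main obstacle; everything else is algebra.

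It remains to integrate the two blocks. Each is a homogeneous linear first-order system, so its general solution is the span of any two independent particular solutions. For the $(K,E)$-block one solution is the Legendre pair $(K(k),E(k))$ itself, by the identities just proved; a second, \emph{complementary} solution is $(-K'(k),\,E'(k)-K'(k))$, which I would verify directly by substituting and invoking the complementary identities from the previous step. Linear independence is then automatic from the Legendre relation $E K'+E' K-K K'=\tfrac{\pi}{2}$, since this constant is precisely the Wronskian of the two solutions; being nonzero, existence--uniqueness for linear ODEs guarantees these exhaust the solution space, yielding the stated $\boldsymbol{\alpha},\boldsymbol{\beta}$-combination. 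The $(K',E')$-block is handled symmetrically, with independent solutions $(K'(k),E'(k))$ and $(K(k),\,K(k)-E(k))$, whose Wronskian is again $\pm\tfrac{\pi}{2}$ by the same Legendre relation, producing the $\boldsymbol{\gamma},\boldsymbol{\delta}$-combination. Assembling the two blocks gives the four-parameter general solution claimed in the proposition, consistent with the second-order reductions \eqref{tk2}.
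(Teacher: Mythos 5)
Your two-stage plan is sound, and it supplies an argument where the paper in fact gives none: Proposition~\ref{P5} is stated without proof (the derivative identities being classical Legendre facts), so there is no "paper route" to deviate from. Your structural observations are all correct: the system \eqref{linear} decouples into traceless $2\times2$ blocks in $(K,E)$ and $(K',E')$; the complementary-modulus substitution $k\mapsto\sqrt{1-k^2}$ transfers the $(K,E)$ identities to the $(K',E')$ pair; the proposed second solutions $(-K',\,E'-K')$ and $(K,\,K-E)$ do satisfy the respective blocks (this can be checked in two lines each); and since both coefficient matrices are traceless, the Wronskians are constant and equal $\pm\bigl(KE'+K'E-KK'\bigr)=\pm\frac{\pi}{2}$, which is nonzero, so linear ODE theory gives exactly the stated four-parameter general solution away from $k=0,\pm1$.

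There is, however, one concrete flaw, precisely in the step you single out as the main obstacle. The auxiliary function you propose for the integration by parts, $\lambda\sqrt{1-k^2\lambda^2}/\sqrt{1-\lambda^2}$, does not work: it diverges at $\lambda=1$, so the integral of its total derivative over $[0,1]$ is not zero (the boundary term is infinite), and moreover its derivative produces a $(1-\lambda^2)^{-3/2}$ singularity rather than the $(1-k^2\lambda^2)^{-3/2}$ appearing in $dK/dk$. The correct choice swaps the two radicals. With
\begin{equation*}
g(\lambda)=\frac{\lambda\sqrt{1-\lambda^2}}{\sqrt{1-k^2\lambda^2}}\,,\qquad
g(0)=g(1)=0\,,\qquad
g'(\lambda)=\frac{1-2\lambda^2+k^2\lambda^4}
{(1-\lambda^2)^{1/2}(1-k^2\lambda^2)^{3/2}}\,,
\end{equation*}
the vanishing of $\int_0^1 g'\,d\lambda$ gives a linear relation among the moments
$I_n=\int_0^1\lambda^{2n}(1-\lambda^2)^{-1/2}(1-k^2\lambda^2)^{-3/2}d\lambda$, namely $I_0-2I_1+k^2I_2=0$; combining it with the elementary reduction $(1-k^2\lambda^2)^{-3/2}=(1-k^2\lambda^2)^{-1/2}+k^2\lambda^2(1-k^2\lambda^2)^{-3/2}$ (which yields $I_0=K+k^2I_1$ and $I_1=\frac{K-E}{k^2}+k^2I_2$) one solves for $I_1$ and obtains $\frac{dK}{dk}=k\,I_1=\frac{E}{k(1-k^2)}-\frac{K}{k}$, as required. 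With this correction your proof is complete. A final cosmetic point: the second-order consequences of \eqref{linear} are the $\Psi$-equations displayed after the proposition, and only the first of them coincides with the $A$-equation of \eqref{tk2} (the paper itself warns that the $E$-equation differs from the $B$-equation), so your closing appeal to \eqref{tk2} should be stated more carefully.
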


Of course, one should bear in mind that the canonical functions
\eqref{KK}--\eqref{EE} themselves are not independent. Rather they
satisfy the Legendre identity
$$
K(k)E'(k)+K'(k)E(k)-K(k)K'(k)=\frac{\pi}{2}\quad \forall k\,,
$$
which is a particular case of the constant level surfaces for  system
\eqref{linear}:
$$
K\,E'+K'\,E-K\,K'=\frac{\pi}{2}\,(
\boldsymbol{\alpha}\,\boldsymbol{\delta}+\boldsymbol{\beta}\,
\boldsymbol{\gamma})\,.
$$
Curiously, this property and Proposition~\ref{P5} seems to have not
been tabulated in the standard texts. The second order differential
consequences of this system are known. Both $K$ and $K'$ satisfy the
same equation
\begin{alignat*}{2}
k\,(k^2-1)\,\frac{d^2\Psi}{dk^2}+(3\,k^2-1)\,\frac{d\Psi}{dk}+k\,
\Psi&=0\qquad\Rightarrow\qquad\Psi=\big\{K(k),\,K'(k)\big\}\\
\intertext{and common equation solvable by functions $E$ and $E'$ reads
as follows} k\,(k^2-1)\frac{d^2\Psi}{dk^2}+(k^2-1)\frac{d\Psi}{dk}-k\,
\Psi&=0 \qquad\Rightarrow\qquad\Psi=\big\{E(k),\,E'(k)-K'(k)\big\}\,.
\end{alignat*}
The two last linear ODEs are not identical to \eqref{tk2} but search
for solutions to Eqs.~\eqref{tk}--\eqref{tk2} is not a difficult task.
In addition to solution pair \eqref{ab}, we obtain that
\begin{equation}\label{AB}
\begin{aligned}
A&=4\,\boldsymbol{\alpha} K(k)+4\, \boldsymbol{\gamma}K'(k)\,,\\[0.3em]
IB&=\boldsymbol{\alpha}\big[E(k)+(k^2-1) K(k)\big]-\boldsymbol{\gamma}
\big[E'(k)-k^2K'(k) \big]
\end{aligned}
\end{equation}
with some free constants $\boldsymbol{\alpha}$, $\boldsymbol{\gamma}$.
We can now combine the `$k$-formulae' \eqref{AB} and $h$-time dynamics
to obtain the complete integral of system \eqref{ABab}.

\subsection{Solution to the Jacobi system\label{S5.4}} Let us
denote
\begin{equation}\label{T}
\T\DEF\frac{\alpha h+\beta}{\gamma h+\delta}\,.
\end{equation}
Then, by virtue of \eqref{lambda},
\begin{equation}\label{kT}
\frac{\alpha h+\beta}{\gamma h+\delta}=\ri\frac{K'(k)}{K(k)}
\qquad \scalebox{1.5}[1]{$\Leftrightarrow$}\qquad k=
\frac{\vartheta_2^2(\T)}{\vartheta_3^2(\T)}\,.
\end{equation}
Make use of the representation for integrals \eqref{KK}--\eqref{EE}
through Jacobi's $\eta,\vartheta$-constants. The canonical formulae for
$K$ and $K'$ are well known:
\begin{equation*}
K(k)=\frac{\pi}{2}\,\vartheta_3^2(h)\,,\qquad
K'(k)=\frac{\pi}{2\,\ri}\,h\,\vartheta_3^2(h)\,,\qquad
k=\frac{\vartheta_2^2(h)}{\vartheta_3^2(h)}\,.
\end{equation*}
One can also show that the second pair $\{E,\,E'\}$ has the following
modular $h$-representation:
\begin{alignat*}{2}
E(k)={}&\frac{2}{\pi}&&\frac{1}{\vartheta_3^2(h)}
\bigg\{\phantom{h}\,\eta(h)+
\frac{\pi^2}{12}\big[\vartheta_3^4(h)+\vartheta_4^4(h)\big]
\bigg\},\\[0.3em]
E'(k)={}&\frac{2\,\ri}{\pi}&&\frac{1}{\vartheta_3^2(h)}
\bigg\{h\,\eta(h)-
\frac{\pi^2}{12}\big[\vartheta_2^4(h)+\vartheta_3^4(h)\big]\,h
-\frac{\pi}{2}\ri \bigg\}.
\end{alignat*}
Modifying these formulae for the general ratio \eqref{T}, we obtain
\begin{equation}\label{temp}
K(k)=\frac{\pi}{2}\,\vartheta_3^2(\T)\qquad\Rightarrow\qquad
\alpha\,K(k)-\ri\,\gamma\,K'(k)=\frac{K(k)}{\gamma\,h+\delta}=
\frac{\pi}{2}\,\frac{\vartheta_3^2(\T)}{\gamma\,h+\delta}\,.
\end{equation}
Adjust the free integration constants in \eqref{AB} with those of
\eqref{T} and \eqref{kT}. Then we may write
\begin{equation}\label{KA}
A=\pm\sqrt{\frac{4\,\ri}{\pi I}}\,
\big\{\alpha\,K(k)-\ri\,\gamma\,K'(k)\big\}
\end{equation}
and therefore
$$
B=\pm\sqrt{\frac{\ri}{4\,\pi\,I^3}}\,\Big\{\alpha\big[E(k)+(k^2-1)
K(k)\big]-\ri\,\gamma\big[E'(k)-k^2K'(k) \big]\Big\}\,.
$$
Passing to the $\vartheta,\eta$-representation, we arrive at the final
form of the sought-for solution.

\begin{theorem}\label{T7}
General solution to the dynamical system of Jacobi \eqref{ABab} has the
form
$$
a=I-2\,I\,\frac{\vartheta_2^4(\T)}{\vartheta_3^4(\T)}\,,\qquad
b=\frac{I^2}{8}\,\frac{\vartheta_2^4(\T)\,\vartheta_4^4(\T)}
{\vartheta_3^8(\T)}\,,\qquad
A=\pm\sqrt{\frac{\pi\ri}{I}}\,
\frac{\vartheta_3^2(\T)}{\gamma\,h+\delta}\,,
$$
$$
B=\pm\sqrt{\frac{\ri\,I}{\pi^3}}\,
\frac{1}{(\gamma\,h+\delta)\,\vartheta_3^2(\T)}
\bigg\{\frac{\pi^2}{12}\!
\left[\vartheta_2^4(\T)-\vartheta_4^4(\T)\right]+
\eta (\T)+\frac\pi2\,\ri\,\gamma\,(\gamma\,h+\delta)\bigg\}\,,
$$
where $\{I,\alpha,\beta,\gamma,\delta\}$ are free constants subjected
to normalization $\alpha\,\delta-\beta\,\gamma=1$.
\end{theorem}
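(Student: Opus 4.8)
The plan is to assemble the general solution by combining three ingredients already prepared in the preceding subsections: the integral \eqref{I} that eliminates $b$ in favour of $a$, the linear‐fractional change $\lambda=\frac{I-a}{2I}$ reducing the dynamics of $a$ to the Schwarzian equation \eqref{lambda} for Legendre's modulus, and the explicit $k$-dependence \eqref{AB} of $A$ and $B$ obtained from the associated linear ODEs of Proposition \ref{P5}. The first step is to establish the formulae for $a$ and $b$. Starting from \eqref{ak}, which already expresses $a$ through $\vartheta_2^4/\vartheta_3^4$ evaluated at the argument $\T=\frac{\alpha h+\beta}{\gamma h+\delta}$, I would substitute into the integral relation $8b=I^2 k^2(1-k^2)$ coming from \eqref{ab}, using the standard modular identity $k=\vartheta_2^2/\vartheta_3^2$ and the Jacobi identity \eqref{324} in the form $\vartheta_3^4=\vartheta_2^4+\vartheta_4^4$ to rewrite $1-k^2=\vartheta_4^4/\vartheta_3^4$. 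This yields the stated $b=\frac{I^2}{8}\,\frac{\vartheta_2^4(\T)\vartheta_4^4(\T)}{\vartheta_3^8(\T)}$ directly.

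Next I would obtain $A$. The key is the identity \eqref{temp}, which converts the linear combination $\alpha K(k)-\ri\gamma K'(k)$ appearing in \eqref{AB} into $\frac{\pi}{2}\vartheta_3^2(\T)/(\gamma h+\delta)$ by way of the modular representations $K=\frac{\pi}{2}\vartheta_3^2$, $K'=\frac{\pi}{2\ri}h\,\vartheta_3^2$ and the relation \eqref{kT} linking $\T$ to $\ri K'/K$. Feeding this into \eqref{KA} and absorbing the constant factor $4$ from \eqref{AB} into the normalization $\sqrt{4\ri/(\pi I)}$ produces $A=\pm\sqrt{\frac{\pi\ri}{I}}\,\frac{\vartheta_3^2(\T)}{\gamma h+\delta}$. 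The computation for $B$ is structurally identical but longer: into the second line of \eqref{AB} I would substitute the modular $h$-representations of $E(k)$ and $E'(k)$ displayed just above \eqref{temp}, generalised to the argument $\T$, and then collect terms. The combination $\alpha[E+(k^2-1)K]-\ri\gamma[E'-k^2K']$ must be reorganised so that the $\eta(\T)$-terms and the $\vartheta_2^4-\vartheta_4^4$ combination group as in the statement, with the residual $\frac{\pi}{2}\ri\,\gamma(\gamma h+\delta)$ surviving from the explicit $h$-dependence carried by $E'$.

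Finally I would verify consistency: that the normalization $\alpha\delta-\beta\gamma=1$ is exactly what is needed for the free constants in \eqref{T}, \eqref{kT} and \eqref{AB} to be compatibly adjusted (the step labelled ``Adjust the free integration constants'' before \eqref{KA}), and that the four quantities $\{A,B,a,b\}$ so constructed indeed satisfy \eqref{ABab}, which can be checked against the differentiation rules \eqref{var2} for the $\vartheta,\eta$-series. The main obstacle I anticipate is the $B$-computation: keeping track of how the non‐modular factor $1/(\gamma h+\delta)$ and the inhomogeneous piece $\frac{\pi}{2}\ri$ inside the $E'$-representation propagate under the general Möbius argument \T, so that they recombine into precisely the stated closed form rather than leaving spurious terms. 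Establishing that these terms cancel correctly, using the Legendre identity as a consistency constraint, is the delicate part; everything else is a guided substitution.
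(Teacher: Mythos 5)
Your proposal is correct and follows essentially the same route as the paper: the paper's own derivation (the whole of Sect.~\ref{S5} preceding the theorem) likewise eliminates $b$ via the integral \eqref{I}, maps the $a$-equation to the Schwarzian \eqref{lambda} for Legendre's modulus to get \eqref{ak}, solves the linearized system \eqref{tk}--\eqref{tk2} to obtain \eqref{AB}, and then converts to the $\vartheta,\eta$-representation through \eqref{kT}, \eqref{temp}, and \eqref{KA}, exactly as you describe. Your concluding consistency check (that the assembled $\{A,B,a,b\}$ satisfy \eqref{ABab} via the differentiation rules \eqref{var2}) is a sensible addition the paper leaves implicit.
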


\subsection{Solution to system \eqref{main}\label{IM}}

One integral for Eqs.~\eqref{main} is easily found because $x$ is
absent in three of these equations. Elimination of $u$ shows that the
function
\begin{equation}\label{H}
\pi\,\boldsymbol{I}^2=\frac{y^2-z^2}{x^2}
\end{equation}
is a constant on solutions of \eqref{main}, that is integral. This
integral is much simpler than those we discussed in Sect.~\ref{S2.1}.
As for solutions to system \eqref{main}, these have the most simple
form as against the other equations we consider. We shall give these
solutions in the next theorem. The last fact we should mention here is
a point transformation from Jacobi's equations to the system
\eqref{main}. The simplest way of getting such a transformation is
realized through the `linearizing' systems \eqref{kI}, \eqref{tk} which
can be thought of as intermediate equivalents for Jacobi's one
\eqref{ABab} or \eqref{main}. Explanation and details have been given
in the previous section. From now on we change Jacobi's $h$-notation
and put
$$
\T\DEF\frac{\alpha\, \tau+\beta}{\gamma\,\tau+\delta}
$$
with normalization $\alpha\,\delta-\beta\,\gamma=1$.

\begin{theorem}\label{T2}
The canonical dynamical system \eqref{main} defining
$\vartheta,\eta$-constants and Jacobi's system \eqref{ABab} are
equivalent. They are related through the following point transformation
\begin{alignat}{4}
A&=\frac{1-\ri}{2\,\bI}\,y\,,&\quad
a&=\frac{12}{\pi\,\ri}\,\frac{y^2-z^2}{x^2}\frac{y^2-2\,z^2}{y^2}\,,
\notag\\[-0.5em]
\label{change}\\[-0.7em]
B&=\frac{1+\ri}{2}\,\frac{\bI}{y}\,(u+y^2-2\,z^2)\,,\quad &
b&=-\frac{18}{\pi^2}\,\frac{z^2}{x^4\,y^4}\,(y^2-z^2)^3\,.\notag
\end{alignat}
The  system \eqref{main} has the following general solution:
$$
x=\varepsilon\frac{\vartheta_2^2(\T)}{\gamma\,\tau+\delta}\,,
\qquad
y=\sqrt{\frac{\pi\ri}{6}}\,
\frac{\vartheta_3^2(\T)}{\gamma\,\tau +\delta}\,,
\qquad
z=\sqrt{\frac{\pi\ri}{6}}\,\frac{\vartheta_4^2(\T)}
{\gamma\,\tau+\delta}\,,
\qquad u=\frac{2\,\ri}{\pi}\,\frac{\eta(\T)}{(\gamma\,\tau+\delta)^2}-
\frac{\gamma}{\gamma\,\tau+\delta}\,,
$$
where $\varepsilon\ne0$ is the fourth free constant. With
$\varepsilon=0$, the solution decouples into the two parametric
elementary one:
$$
x=0\,,\qquad y=\frac{\pm 1}{\gamma\,\tau+\delta}\,,\qquad z=
\frac{1}{\gamma\,\tau+\delta}\,,
\qquad u=-\frac{\gamma^2\,\tau+\gamma\,\delta-1}
{(\gamma\,\tau+\delta)^2}\,.
$$
\end{theorem}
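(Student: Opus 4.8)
The statement comprises two logically separable parts: the equivalence of \eqref{main} with Jacobi's system \eqref{ABab} under the point change \eqref{change}, and the explicit general solution. My plan is to settle the solution first, since it is the cleaner of the two, and then establish the equivalence by direct substitution; the two computations will then serve as mutual consistency checks.

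For the solution I would argue by a base case followed by a symmetry lift. The base case is that the genuine constants $\vartheta_2,\vartheta_3,\vartheta_4,\eta$, rescaled by \eqref{def}, solve \eqref{main}: by Proposition~\ref{T1} they satisfy \eqref{var}, and on the surface \eqref{324} the system \eqref{var} collapses onto \eqref{var2} (the three theta-equations differ only by multiples of $\vartheta_3^4-\vartheta_2^4-\vartheta_4^4$, and the $\eta$-equation agrees after using \eqref{324}), while \eqref{main} is just \eqref{var2} in the variables \eqref{def}. To reach the four-parameter family I would exhibit the symmetry group of \eqref{main}. With $\T=(\alpha\tau+\beta)/(\gamma\tau+\delta)$ and $c=\gamma\tau+\delta$, so that $d\T/d\tau=c^{-2}$ and $\dot c=\gamma$, the chain rule together with the weighted homogeneity of the right-hand sides shows that if $(\hat x,\hat y,\hat z,\hat u)(\T)$ solves \eqref{main} then $x=\hat x/c$, $y=\hat y/c$, $z=\hat z/c$, $u=\hat u/c^2-\gamma/c$ solves \eqref{main} in $\tau$; the powers of $c$ cancel, and the inhomogeneous shift $-\gamma/c$ in $u$ is exactly what the $\dot u$-equation demands — the dynamical shadow of the quasi-modular anomaly of $\eta$. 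Independently, since $x$ enters \eqref{main} only linearly and is absent from the equations for $y,z,u$, the scaling $x\mapsto\varepsilon\,x$ is a further symmetry. Applying both to the base solution yields precisely the stated family; the count $3$ (normalized Möbius parameters) plus $1$ (the constant $\varepsilon$) equals the order $4$ of the system, so by existence–uniqueness it is the general solution, once one checks that the parameter-to-initial-data map is nondegenerate.

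The degenerate branch lives on the invariant set $\{x\equiv0,\ y^2=z^2\}$, where the flow reduces to $\dot u=u^2-y^4$ and $\dot y=(u-y^2)y$, $\dot z=(u-y^2)z$; taking $z=1/c$, $y=\pm1/c$ and solving the linear relation for $u$ reproduces the stated rational expression, which I would simply verify by substitution. For the equivalence I would insert \eqref{change} into \eqref{ABab} and reduce through \eqref{main}, crucially treating $\bI$ as the conserved integral \eqref{H} so that $\dot{\bI}=0$. The homogeneous equations $\dot A=2A^2B$ and $\dot a=-16\,bA^2$ then close after elementary algebra: for instance $\dot a$ is proportional to $\tfrac{d}{d\tau}(z^2/y^2)=-6\,(z^2/y^2)(y^2-z^2)$, which matches $-16\,bA^2$ once $x^2$ is eliminated via $\bI$, and the $y\rightleftarrows z$ symmetry halves the labour. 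A useful global check is that \eqref{change} sends $a^2+32\,b$ to $-144\,\bI^4$, i.e.\ Jacobi's integral \eqref{I} to the square of \eqref{H}, consistent with $I=\pm12\,\ri\,\bI^2$.

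The main obstacle is not conceptual but lies in the two remaining nonlinear equations $\dot B=bA^3$ and $\dot b=abA^2$, whose verification requires differentiating the high-degree expression for $b$ and repeatedly resubstituting \eqref{main}; this is where a sign or a power of $\bI$ is easiest to mislay. A second delicate point is genuine \emph{equivalence} rather than one-sided implication: I would confirm that \eqref{change} is algebraically invertible on the generic stratum — the pair $a,b$ determines $z^2/y^2$ and $\bI$, after which $A,B$ recover $y$ and $u$ — so that solutions correspond bijectively, albeit multivaluedly as is typical here. Finally, in cross-checking the transported solution against Theorem~\ref{T7} one must reconcile the two time variables through $h=\tfrac14\pi\ri\,\tau$ and the two Möbius parametrizations; the systems are equivalent as autonomous equations in a common time, and the factor relating $h$ and $\tau$ enters only when aligning the explicit $\vartheta,\eta$-arguments.
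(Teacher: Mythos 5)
Your proposal is correct, and it reaches the theorem by a genuinely different route than the paper. The paper argues constructively: it derives the change \eqref{change} by passing through the intermediate ``linear'' variables $(A,B,k,I)$ of \eqref{kI} --- using the general solution of Jacobi's $C$-equation \eqref{CC}, the identities \eqref{ab} and formulae \eqref{kT}--\eqref{KA} --- first producing the map \eqref{change1} from $\{A,B,k,\bI\}$ to $\{x,y,z,u\}$, then inverting it to get \eqref{change2} and \eqref{change}, and finally \emph{transporting} the general solution of Theorem~\ref{T7} through this change; so for the paper the equivalence comes first and the solution of \eqref{main} is inherited from the already-integrated Jacobi system. You invert that logic: you prove the solution formula for \eqref{main} autonomously (the theta-constant particular solution via Proposition~\ref{T1} plus the collapse of \eqref{var} onto \eqref{var2} on the surface \eqref{324}, lifted by the M\"obius symmetry with the quasi-modular shift in $u$ and the scaling $x\mapsto\varepsilon x$, closed by a parameter count), and you prove the equivalence by brute-force substitution of \eqref{change} into \eqref{ABab} with $\bI$ frozen as the integral \eqref{H}. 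Both halves are sound: the shift $-\gamma/c$ is indeed forced by the $\dot u$-equation, and the substitution does close --- e.g.\ one finds $\dot B=\tfrac{1+\ri}{2}\,\bI\cdot 9\,z^2(y^2-z^2)/y$, which equals $bA^3$ after eliminating $x^4$ via $(y^2-z^2)^2/x^4=\pi^2\bI^4$, and your global check that $a^2+32\,b$ goes to $-144\,\bI^4=I^2$ is right. What each approach buys: the paper's derivation explains where the non-obvious change \eqref{change} comes from and reuses Theorem~\ref{T7}, so essentially no new computation is needed (though the details are declared ``omitted''); your version is self-contained --- it needs neither Theorem~\ref{T7} nor the linearization for the solution part, makes the modular covariance of \eqref{main} explicit, and turns the equivalence into a finite, checkable identity. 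For completeness you must still execute the two items you flag: the nondegeneracy of the parameter-to-initial-data map (easy, since $\vartheta_3^2(\T_0)/\vartheta_4^2(\T_0)$ is a nonconstant modular function of $\T_0$, so $(\T_0,\gamma\tau_0+\delta,\gamma,\varepsilon)\mapsto(x_0,y_0,z_0,u_0)$ is generically invertible), and the derivative checks for $\dot B$ and $\dot b$; both go through, and your handling of the $h$ versus $\tau$ issue (equivalence as autonomous systems in a common time) matches what the paper's point transformation actually asserts.
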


\begin{proof}
The most convenient way to get the point transformation is to exploit
the known general solution of Jacobi's $C$-equation \eqref{CC}
\cite[p.~186]{jacobi}:
$$
C^{\smin1}=\sqrt{\frac{\pi\ri}{6}}\,
\frac{\vartheta_k^2\big(\frac{\alpha\,\tau+\beta}{\gamma\,\tau+\delta}
\big)}
{\gamma\,\tau+\delta}
$$
and to pass to the intermediate set of (`linear') variables $(A,B,k,I)$
followed by use of identities \eqref{ab}, formulae
\eqref{kT}--\eqref{KA}, and elimination of  integration constants
$\{\alpha,\beta,\gamma,\delta\}$ appearing in the general solution
given by Theorem~\ref{T7}.  Omitting the computation details, we derive
the  transformation $\{A,B,k,\bI\}\to \{x,y,z,u\}$:
\begin{equation}\label{change1}
x= \frac{1+\ri}{\sqrt{\pi}}\,k\,A\,,\quad
y=(1+\ri)\, \bI A\,,\quad
z^2=2\,\ri\,\bI^2(1-k^2)\, A^2\,,\quad
u=2\, A\big\{B-\ri\,\bI^2(2\,k^2-1)\,A\big\}\,,
\end{equation}
where $12\,\ri\,\bI^2=I$. This change turns \eqref{main} into the
system \eqref{kI}. Inverting this change, we obtain
\begin{equation}\label{change2}
A=\frac{1-\ri}{2\,\bI}\,y\,,\qquad
B=\frac{1+\ri}{2}\,\frac{\bI}{y}\,(u+y^2-2\,z^2)\,,\quad
\bI^2=\frac{1}{\pi}\frac{y^2-z^2}{x^2}\,,\quad k^2=1-\frac{z^2}{y^2}
\end{equation}
and, subsequently, the substitution \eqref{change}. Making use of
solution given in Theorem~\ref{T7}, we get the solution for variables
$\{x,y,z,u\}$.
\end{proof}

Summarizing, an `integrable' modification of the change \eqref{point1}
is not obvious a priori; this is the change \eqref{change}. In turn,
integral $\pi\,\boldsymbol{I}^2x^2=y^2-z^2$ represents a (corrected)
version of complicated integral \eqref{Ivar} and Jacobi's identity
\eqref{324} turns into a constant level surface in the phase space
$(x,y,z,u)$.

\begin{remark}\label{R1}
From the preceding, incidentally, it follows that equations of the
system \eqref{darboux} become now the differential identities for all
solutions of Eqs.~\eqref{var2} (proof is a calculation). Thus, the
Darboux--Halphen system \eqref{darboux} is a \textit{sub}system for
Eqs.~\eqref{var2} but is a \textit{reduction} for system~\eqref{var}.
See also the last sentence  in Appendix (Sect.~\ref{S5.1}).
\end{remark}

\section{Derivation of integrals\label{S5.5}} In order to integrate
system \eqref{ABab} we made use of its first integral \eqref{I}. Having
a complete solution, we can find the two remaining conserved quantities
and the fourth `integral' corresponds to the time shift  $h\mapsto
h+\varepsilon$. The simplest way to derive the integrals is to use the
linear fractional formula \eqref{kT}. Indeed, the $h$-derivative of
this formula gives the equalities
\begin{equation}\label{KK'}
(\gamma\,h+\delta)^2=\left(\frac{d}{dh}\frac{\alpha\,h+\beta}
{\gamma\,h+\delta}\right)^{\!\!\smin1}=
\left(\ri\,\frac{d}{dh}\frac{K'(k)}{K(k)}\right)^{\!\!\smin1}
=\cdots
\end{equation}
and therefore expression
$$
\cdots=\left(\frac I2\,k\,(1-k^2)\,A^2\,\cdot\ri
\frac{d}{dk}\frac{K'(k)}{K(k)}\right)^{\!\!\smin1}\FED\Phi^2(A,B,k,I)
$$
must be a perfect square. Upon rooting, we  get the $h$-linear function
$\gamma\,h+\delta$ with coefficients depending on dynamical variables
$\{A,B,k,I\}\:\scalebox{1.5}[1]{$\Leftrightarrow$}\: \{A,B,a,b\}$. Its
$h$-derivative
$$
\frac{d\Phi}{dh}=
2\,A^2B\,\frac{\partial\Phi}{\partial A}+
\frac18\,I^2k^2\,(1-k^2)\,A^3\,
\frac{\partial\Phi}{\partial B}+
\frac12\, I\,k\,(1-k^2)\,A^2\,\frac{\partial\Phi}{\partial k}=\cdots
$$
yields an $h$-independent constant $\gamma$, that is integral
$$
\cdots=J_1(A,B,a,b)\,.
$$
Doing the same for
$$
(\alpha\,h+\beta)^2=\left(\frac{d}{dh}\frac{\gamma\,h+\delta}
{\alpha\,h+\beta}\right)^{\!\!\smin1}=
\left(\!-\ri\frac{d}{dh}\frac{K(k)}{K'(k)}\right)^{\!\!\smin1}
=\cdots\,,
$$
we get one more integral $J_2(A,B,a,b)\sim\alpha$. Both of these
integrals are independent of each other since  $\{\alpha,\gamma\}$ are
independent constants. All the calculus with objects $K,k,\ldots$ has
been described in the previous section and computations are somewhat
lengthy but routine. We therefore omit them entirely.

\begin{proposition}\label{T8}
The Jacobi system \eqref{ABab} has the only algebraic $($rational\/$)$
integral $I^2=a^2+32\,b$ and the two functionally independent
transcendental integrals
\begin{equation}\label{J12}
\begin{aligned}
J_1&=4\,K(k)\,\cdot\! B-\big\{E(k)+(k^2-1)\,K(k)\big\}\!\cdot
\! A\, I\,,\\[0.3em]
J_2&=4\,K'(k)\!\cdot\! B+\big\{E'(k)-k^2\,K'(k)\big\}\!\cdot\! A\, I\,,
\end{aligned}
\end{equation}
where $\{I,k\}$, if required, can be expressed via $\{a,b\}$ by the
inversion of formulae \eqref{ab}:
$$
I=\sqrt{a^2+32\,b\,}\,,\qquad
k^2=\frac12-\frac12\,\frac{a}{\sqrt{a^2+32\,b\,}}\,.
$$
Integrals $J_1$, $J_2$ are the multi-valued transcendental functions of
dynamical variables $\{a,b\}$ and the linear ones of $\{A,B\}$.
\end{proposition}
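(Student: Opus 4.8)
The plan is to read the dynamics in the linearized variables $\{A,B,k,I\}$ governed by \eqref{kI}, where the elliptic machinery of Proposition~\ref{P5} is directly at hand, and to treat the three assertions separately. The rational integral is immediate: combining the last two equations of \eqref{ABab} gives $a\,\partial_h a=-16\,\partial_h b$, hence $\partial_h(a^2+32\,b)=0$, which is exactly \eqref{I}. Solving the pair \eqref{ab} for $(k,I)$ inverts this relation to the stated $I=\sqrt{a^2+32\,b}$ and $k^2=\tfrac12-\tfrac12\,a/\sqrt{a^2+32\,b}$.

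For the transcendental integrals I would not integrate a fresh ODE but invert the known solution, following the scheme of Section~\ref{S5.5}. Formulae \eqref{AB} express $A$ and $IB$ as linear combinations of $\{K,K',E,E'\}$ with the two integration constants $(\boldsymbol\alpha,\boldsymbol\gamma)$ as coefficients. Reading \eqref{AB} as a $2\times2$ linear system for $(\boldsymbol\alpha,\boldsymbol\gamma)$, its determinant collapses, after collecting the $KK'$-terms via $k^2-(k^2-1)=1$, to $-4\big(K E'+K'E-KK'\big)$, which by the Legendre identity equals the nonzero constant $-2\pi$. Cramer's rule then returns $\boldsymbol\alpha,\boldsymbol\gamma$ as expressions linear in $(A,B)$ whose coefficients are the transcendental functions $K,K',E,E'$ of $k$; these are constant along every trajectory by construction, and rewriting $k,I$ through \eqref{ab} turns them into \eqref{J12}. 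I would confirm the final forms rigorously by the purely mechanical check $\dot J_1=\dot J_2=0$ along \eqref{kI}: substituting $\dot A,\dot B,\dot k$ together with the closure relations \eqref{linear}, one uses the two derived identities $\tfrac{d}{dk}\big[E+(k^2-1)K\big]=kK$ and $\tfrac{d}{dk}\big[E'-k^2K'\big]=-kK'$ to cancel the $A^3$-terms, while the relations for $dK/dk$, $dK'/dk$ cancel the remaining $A^2B$-terms.

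Functional independence of $J_1,J_2$ is then immediate: since the Cramer determinant is the nonvanishing constant $-2\pi$, the linear map $(A,B)\mapsto(J_1,J_2)$ at fixed $(a,b)$ is invertible, so $J_1,J_2$ are independent and, being linear in $(A,B)$ with transcendental coefficients in $(a,b)$, are genuinely transcendental. The step I expect to be the real obstacle is the word \emph{only}: proving that $I^2=a^2+32\,b$ is the sole algebraic integral. Every first integral is a function $F=G(I,J_1,J_2)$ of the three basic ones; a rational $F(A,B,a,b)$ must stay rational in $(A,B)$ and in $(a,b)$ simultaneously, yet $I$ depends on $(a,b)$ alone while $J_1,J_2$ are linear in $(A,B)$ carrying the transcendental coefficients $K,K',E,E'$. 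For that modulus-dependence to drop out, $G$ must be independent of $J_1,J_2$, leaving $F=G(I)$. Making this watertight requires the transcendence input that $K,E$ are transcendental over $\mathbb{C}(k)$ and that, apart from the single Legendre relation already used, no further algebraic relation links $K,K',E,E'$ over the field generated by $(A,B,a,b)$; granting this, no algebraic recombination of $J_1,J_2$ is possible and $I$ is the only algebraic integral. This bookkeeping is the one genuinely nonroutine point, the rest being the lengthy but mechanical calculus already alluded to in Section~\ref{S5.5}.
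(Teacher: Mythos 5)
Your proposal is correct in substance and follows essentially the paper's own route. The paper derives \eqref{J12} in two ways: its primary derivation extracts the M\"obius constant from \eqref{kT} (writing $\gamma h+\delta=\Phi(A,B,k,I)$ and differentiating to get $\gamma\sim J_1$), and its secondary derivation uses the Wronskian of the Legendre equation \eqref{tk2} with the particular solutions $K$, $K'$. Your Cramer inversion of the solution formula \eqref{AB} is precisely that Wronskian argument in matrix form, with the determinant collapsing to Legendre's relation. Your verification identities $\frac{d}{dk}\big[E+(k^2-1)K\big]=kK$ and $\frac{d}{dk}\big[E'-k^2K'\big]=-kK'$ are correct, and the cancellation pattern you describe ($A^3$-terms against these identities, $A^2B$-terms against \eqref{linear}) is exactly what happens; your independence argument via the nonvanishing Jacobian in $(A,B)$ matches the paper's remark that $\alpha,\gamma$ are independent constants. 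As for the word \emph{only}: the paper offers no proof of that claim at all (it omits the computations as ``lengthy but routine''), so your honest identification of the needed transcendence input---that Legendre's relation is the sole algebraic relation among $K,K',E,E'$ over $\mathbb{C}(k)$---goes, if anything, beyond what the paper does.

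One bookkeeping wrinkle you should repair. Formula \eqref{AB} as printed is not self-consistent: the first equation of \eqref{tk} forces $B=\frac{I}{4}\,k(1-k^2)A_k$, so $A=4\boldsymbol{\alpha}K+4\boldsymbol{\gamma}K'$ entails $B=I\big\{\boldsymbol{\alpha}[E+(k^2-1)K]-\boldsymbol{\gamma}[E'-k^2K']\big\}$; the left-hand side of the second line of \eqref{AB} should therefore be $B/I$, not $IB$ (the two lines cannot hold with a common pair of constants unless $I^2=1$, the value on Jacobi's original constraint \eqref{ab16}). If you run Cramer's rule on \eqref{AB} literally, you obtain $2\pi\boldsymbol{\alpha}=4K'\!\cdot\!IB+[E'-k^2K']A$ and $-2\pi\boldsymbol{\gamma}=4K\!\cdot\!IB-[E+(k^2-1)K]A$, which differ from \eqref{J12} in the placement of $I$ and are in fact \emph{not} conserved: their derivative along \eqref{tk} is proportional to $I^2-1$. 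With the corrected solution formula, Cramer's rule gives $J_1=-2\pi I\boldsymbol{\gamma}$ and $J_2=2\pi I\boldsymbol{\alpha}$, i.e.\ exactly \eqref{J12} as products of constants (and the Jacobian in your independence argument becomes $-2\pi I$, still nonzero). This does not sink your proof, because your final mechanical check $\dot J_1=\dot J_2=0$ is performed on \eqref{J12} itself and goes through; but as written, the sentence asserting that Cramer's rule ``turns them into \eqref{J12}'' is false for \eqref{AB} as printed, and it is the direct check that actually carries the argument.
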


Curiously, the `very simple` monomial dynamical system \eqref{ABab} has
rather complicated transcendently algebraic integrals.  Another way of
derivation of integrals exploits the linear equations
\eqref{tk}--\eqref{tk2} and the well-known Wronskian relation for 2nd
order linear ODEs. For example, the $A$-equation in \eqref{tk2} has
$K(k)$ as its particular solution. Therefore
$$
\left\{K(k)\cdot\frac{dA}{dk}-\frac{dK(k)}{dk}\cdot A\right\}
(k^2-1)\,k=\mathrm{const}\,.
$$
Replacing here $\frac{d}{dk}A$ by $B$ through \eqref{tk} and using
rules \eqref{linear}, we arrive again at the integral $J_1$. The choice
of $K'(k)$ for a particular solution produces the second integral $J_2$
in \eqref{J12}.

\section{Integrals and Lagrangian\label{S3}}
\subsection{Conserved quantities}
Lagrangians, Hamiltonians, and Poisson structures for dynamical systems
are known to be closely related to integrals of the corresponding ODEs.
The Hamiltonian formalism for the systems \eqref{ABab} and
\eqref{main}, which we are about to give is based on construction of
conserved quantities and we first tabulate the complete set of such
objects associated with equations \eqref{main}.

\begin{proposition}\label{P2}
The system \eqref{main} has the only algebraic $($rational\/$)$
integral \eqref{H} and the two transcendental multi-valued ones
\begin{equation}\label{Jnew12}
\boldsymbol{J}{\!_1}=\frac1y(u-2\,y^2+z^2)\,
K\Big(\mfrac zy\Big)+
3\,y\,E\Big(\mfrac zy\Big)\,,
\qquad
\boldsymbol{J}{\!_2}=\frac1y(u+y^2+z^2)\,K'\Big(\mfrac zy\Big)-
3\,y\,E'\Big(\mfrac zy\Big)\,,
\end{equation}
that is\/
$\;\bdot{\!\!\!\boldsymbol{J}}{\!_1}=
\;\bdot{\!\!\!\boldsymbol{J}}{\!_2}
\equiv 0$. The integrals satisfy the identity
$$
\boldsymbol{J}{\!_1}K'\Big(\mfrac zy\Big)-\boldsymbol{J}{\!_2}\,
K\Big(\mfrac zy\Big)=\frac32\,\pi\, y\,.
$$
\end{proposition}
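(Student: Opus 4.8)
The rational integral \eqref{H} is already in hand, so the task reduces to showing that $\boldsymbol{J}_1,\boldsymbol{J}_2$ are conserved and that the stated bilinear identity holds. The plan is a direct differentiation along the flow \eqref{main}, the only external input being the closure relations \eqref{linear} of Proposition~\ref{P5} for the complete integrals $K,E,K',E'$ regarded as functions of the modulus $m:=z/y$. First I would compute $\dot m$. Using $\dot z=(u-2y^2+z^2)z$ and $\dot y=(u+y^2-2z^2)y$ one finds $\dot z\,y-z\,\dot y=3zy(z^2-y^2)$, hence
\[
\frac{d m}{d\tau}=\frac{\dot z\,y-z\,\dot y}{y^2}=3\,m\,(z^2-y^2)=3\,m\,y^2(m^2-1).
\]

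The crucial simplification is that this particular factor $m(m^2-1)$ is exactly what cancels the singular denominators in \eqref{linear}. Substituting $\dot m$ into the chain rule $\frac{d}{d\tau}K(m)=K_m\,\dot m$ (and likewise for $E,K',E'$), the poles at $m=0,\pm1$ disappear and one is left with polynomial coefficients in $(y,z)$:
\[
\frac{d}{d\tau}K(m)=-3(z^2-y^2)K-3y^2E,\qquad \frac{d}{d\tau}E(m)=3(z^2-y^2)(E-K),
\]
and similarly $\frac{d}{d\tau}K'(m)=-3z^2K'+3y^2E'$, $\frac{d}{d\tau}E'(m)=3z^2(E'-K')$, where I have used $y^2(m^2-1)=z^2-y^2$ and $m^2y^2=z^2$. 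It remains to differentiate the two products in \eqref{Jnew12} by the product rule, insert these derivatives together with $\dot y,\dot z,\dot u$ from \eqref{main}, and collect the coefficients of $K$ and $E$ (respectively $K'$ and $E'$). For $\boldsymbol{J}_1$ the coefficient of $E$ reduces to $3y\big[-(u-2y^2+z^2)+(u+y^2-2z^2)+3(z^2-y^2)\big]=0$ and the coefficient of $K$ collapses by an identical mechanism; the same bookkeeping kills $\dot{\boldsymbol J}_2$. One auxiliary computation, $\frac{d}{d\tau}\big[\frac1y(u-2y^2+z^2)\big]=\frac{3}{y}(z^2-y^2)(u+y^2+z^2)$, feeds the $K$-coefficient in $\boldsymbol J_1$, while the derivative $\frac{d}{d\tau}\big[\frac1y(u+y^2+z^2)\big]=\frac{3z^2}{y}(u-2y^2+z^2)$ feeds the $K'$-coefficient in $\boldsymbol J_2$.

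For the bilinear identity I would not differentiate but simply expand $\boldsymbol{J}_1K'(m)-\boldsymbol{J}_2K(m)$. The two $KK'$ terms combine with prefactor $\frac1y\big[(u-2y^2+z^2)-(u+y^2+z^2)\big]=-3y$, and after cancellation only $3y\,[E\,K'+E'\,K-K\,K']$ survives; by the Legendre identity (the $\forall k$ form quoted after Proposition~\ref{P5}, applied at modulus $m$) the bracket equals $\pi/2$, giving $\tfrac32\pi y$ exactly.

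The computation carries no conceptual difficulty once $\dot m$ is in hand; the only thing to get right is the bookkeeping of the polynomial cancellations, and the one genuinely structural observation is that the factor $m(m^2-1)$ in $\dot m$ is tailored to annihilate the singular parts of \eqref{linear}, which is precisely why the transcendental integrals are built from $K,E$ at the modulus $z/y$. As an alternative route I would transport the Jacobi integrals $J_1,J_2$ of Proposition~\ref{T8} through the point map \eqref{change}: since \eqref{change2} gives $\bI^2x^2=(y^2-z^2)/\pi$ and $k^2=1-z^2/y^2$, Jacobi's modulus passes to the complementary one $z/y$, so that $K(k),E(k)$ become $K',E'$ at argument $z/y$ and conversely, reproducing \eqref{Jnew12} up to normalising constants. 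I expect the direct verification to be the cleaner write-up.
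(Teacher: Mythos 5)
Your proposal is correct and follows essentially the same route as the paper, whose proof is precisely the ``straightforward computation'' you carry out: differentiation along the flow \eqref{main} combined with the closure relations of Proposition~\ref{P5} (applied at modulus $z/y$), with the transport of the Jacobi integrals of Proposition~\ref{T8} through \eqref{change}--\eqref{change2} as the companion ingredient you also mention. Your intermediate formulas ($\dot m=3m(z^2-y^2)$, the derivatives of the prefactors, and the Legendre-identity step for the bilinear relation) all check out.
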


\begin{proof} Straightforward  computation with use of
Propositions~\ref{P5}, \ref{T8} and system \eqref{main} itself.
\end{proof}

It should be emphasized here that integrability of any modular system
is always associated with linear ODEs of Fuchsian class; in particular,
with the hypergeometric equations \cite{ohyama,ablowitz3}. This makes
\textit{inevitable} the appearance of transcendentally multi-valued
functions like $K$, $K'$, $E$, $E'$. Transition between such a `linear'
and `modularly nonlinear' $\tau$-representation was explained in
Sect.~\ref{S5.2}.

Insomuch as the famous system \eqref{darboux} has an extensive
literature and applications, it is not without of interest to translate
just obtained integrals into conserved quantities for this system. The
authors of work \cite{nutku} note properly on p.~5709: `These conserved
quantities have not appeared in the literature for over a century even
though a great deal of works has been done in related areas'. These
integrals (including a generalization of \eqref{darboux}) are discussed
in Ref.~\cite{chakr} and implicitly represented there in a form of
`nonalgebraic, transcendental transformation \ldots\ between'
\cite[p.~1755]{chakr} dynamical variables and integrals through
solutions of a hypergeometric equation and its derivatives.

\begin{proposition}\label{T3}
The following is a complete set of $($two\/$)$ conserved quantities for
the Darboux--Halphen system \eqref{darboux}:
\begin{alignat*}{4}
J_1&=\frac{Z}{\sqrt{X-Z\,}}&\,K\bigg(\!\sqrt{\mfrac{X-Y}{X-Z}}\, \bigg)
&+\sqrt{X-Z\,}&\,E\bigg(\!\sqrt{\mfrac{X-Y}{X-Z}}\,
\bigg)\,,\\[0.3em]
J_2&=\frac{X}{\sqrt{X-Z\,}}&\,K'\bigg(\!\sqrt{\mfrac{X-Y}{X-Z}}\,
\bigg) &-\sqrt{X-Z\,}&\,E'\bigg(\!\sqrt{\mfrac{X-Y}{X-Z}}\, \bigg)\,.
\end{alignat*}
\end{proposition}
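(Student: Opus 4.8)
The plan is to obtain the integrals of \eqref{darboux} by pulling back the conserved quantities $\boldsymbol{J}_1,\boldsymbol{J}_2$ of Proposition~\ref{P2} along the logarithmic-derivative correspondence that links system \eqref{main} to the Darboux--Halphen system. First I would note that, by the definition \eqref{xyz} combined with the rescaling \eqref{def}, the constant prefactors cancel in the logarithmic derivatives, so $X=\dot x/x$, $Y=\dot y/y$, $Z=\dot z/z$. Reading off the right-hand sides of \eqref{main} yields the linear relations
\begin{equation*}
X=u+y^2+z^2,\qquad Y=u+y^2-2\,z^2,\qquad Z=u-2\,y^2+z^2,
\end{equation*}
which invert explicitly to
\begin{equation*}
u=\tfrac13(X+Y+Z),\qquad y^2=\tfrac13(X-Z),\qquad z^2=\tfrac13(X-Y).
\end{equation*}

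Second, I would carry out the substitution. The modulus becomes $z/y=\sqrt{(X-Y)/(X-Z)}$, which is exactly the argument of $K,E,K',E'$ in the statement. Using $y=\sqrt{(X-Z)/3}$ together with $u-2y^2+z^2=Z$ and $u+y^2+z^2=X$, the prefactors in \eqref{Jnew12} collapse to
\begin{equation*}
\frac1y(u-2\,y^2+z^2)=\sqrt{3}\,\frac{Z}{\sqrt{X-Z}},\qquad
\frac1y(u+y^2+z^2)=\sqrt{3}\,\frac{X}{\sqrt{X-Z}},\qquad
3\,y=\sqrt{3}\,\sqrt{X-Z}.
\end{equation*}
Hence $\boldsymbol{J}_1=\sqrt{3}\,J_1$ and $\boldsymbol{J}_2=\sqrt{3}\,J_2$ with $J_1,J_2$ precisely as displayed in the proposition, the overall factor $\sqrt3$ and the branch of $\sqrt{X-Z}$ being irrelevant to the conservation property.

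Third, I would transfer conservation from \eqref{main} to \eqref{darboux}. Since $X,Y,Z$ depend only on $(u,y^2,z^2)$, the relations above are an invertible change of variables (off the branch locus) between $(u,y^2,z^2)$ and $(X,Y,Z)$, and I would verify that it conjugates the closed $(y,z,u)$-subsystem of \eqref{main} to \eqref{darboux}. Writing $p=y^2$, $q=z^2$, system \eqref{main} gives $\dot p=2\,p(u+p-2\,q)$, $\dot q=2\,q(u-2\,p+q)$, $\dot u=u^2-p^2+pq-q^2$, and a one-line computation produces $\dot X=\dot u+\dot p+\dot q=(Y+Z)\,X-YZ$, with the equations for $\dot Y,\dot Z$ following by the symmetry $y\rightleftarrows{\pm}z$; this is the content of Remark~\ref{R1}. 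Because $\boldsymbol{J}_1,\boldsymbol{J}_2$ are constant along the flow of \eqref{main} by Proposition~\ref{P2}, their constant multiples $J_1,J_2$ are constant along \eqref{darboux}.

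I do not anticipate a genuine obstacle: the whole argument rests on the invertible \emph{linear} map $(u,y^2,z^2)\leftrightarrow(X,Y,Z)$, and the only point requiring care is that the branch of $\sqrt{X-Z}$ be chosen consistently with the modulus $\sqrt{(X-Y)/(X-Z)}$. As an independent confirmation that bypasses Remark~\ref{R1}, one can instead differentiate $J_1,J_2$ directly along \eqref{darboux}, eliminate $dK/dk$, $dE/dk$, $dK'/dk$, $dE'/dk$ by the closure relations \eqref{linear} of Proposition~\ref{P5}, and check that the result vanishes identically; this route is correct but computationally heavier.
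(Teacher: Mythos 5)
Your argument is correct and is essentially the paper's own proof: write $X,Y,Z$ in terms of $(u,y^2,z^2)$ via \eqref{XYZ}, invert, and substitute into the integrals \eqref{Jnew12} of Proposition~\ref{P2}, with conservation carried over by Remark~\ref{R1} (which you verify explicitly rather than cite). The only slip is minor: since the symmetry $y\rightleftarrows{\pm}z$ fixes $X$ and merely swaps $Y\leftrightarrow Z$, the $\dot Y$-equation does not follow from your $\dot X$-computation by symmetry --- you must check $\dot Y=(X+Z)\,Y-XZ$ by the same one-line calculation, after which $\dot Z$ does follow.
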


\begin{proof}
Taking Remark~\ref{R1} into account and using the definitions
\eqref{xyz} and \eqref{def}, we may write system \eqref{main} as
follows
\begin{equation}\label{XYZ}
X=u+y^2+z^2\,,\qquad Y=u+y^2-2\,z^2\,,\qquad Z=u-2\,y^2+z^2\,.
\end{equation}
Hence
$$
u=\frac13\,(X+Y+Z)\,,\qquad y^2=\frac13\,(X-Z)\,,
\qquad z^2=\frac13\,(X-Y)\,.
$$
Substituting this into \eqref{Jnew12}, we get the statement of the
proposition.
\end{proof}

Not so simple but straightforward computation leads transcendental
integrals for dynamical system \eqref{g2g3}. To do this, one uses the
standard notation $P_\nu^\mu$, $Q_\nu^\mu(z)$ for Legendre's functions
with indices $(\nu,\mu)=\big(\frac12,\frac13 \big)$. Recall that both
of these functions satisfy the equation \cite{WW}
$$
(1-z^2)\,\psi''-2\,z\,\psi'+\Big\{\nu(\nu+1)-
\Mfrac{\mu^2}{1-z^2}\Big\}\,\psi=0\,.
$$
Then one can derive and check the following statement.

\begin{proposition}\label{Ig23}
The following expressions
$$
\begin{aligned}
J_1&=\!\!\sqrt[\uproot{1}-2]{g_2^{}w\,}\,\Big\{
P_\nu^\mu(g_3^{}w)-\left(g_3^{}-\tfrac23\,\eta\,g_2^{}\right)\!w
\mspace{5.5mu}
P_{\scalebox{0.7}[1]{$\sss{\!\!-}$}\nu}^\mu(g_3^{}w)\Big\}\,,
\\[0.3em]
J_2&=\!\!\sqrt[\uproot{1}-2]{g_2^{}w\,}\,\Big\{
Q_\nu^\mu(g_3^{}w)-\left(g_3^{}-\tfrac23\,\eta\,g_2^{}\right)\!w
\,Q_{\scalebox{0.7}[1]{$\sss{\!-}$}\nu}^\mu(g_3^{}w)\Big\}\,,
\end{aligned}\qquad
w\DEF\!\!\sqrt[\uproot{2}\leftroot{-1}-2]{g_3^2-
\Mfrac{1}{27}\,g_2^3\,}
$$
provide the two independent transcendental integrals for the
Weierstrass  system \eqref{g2g3}.
\end{proposition}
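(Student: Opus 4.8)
The plan is to obtain the pair $J_1,J_2$ by transporting the conserved quantities already built for the algebraically equivalent systems, and then to confirm the outcome by differentiating directly along the flow \eqref{g2g3}. The starting point is that, by \eqref{g23}, \eqref{def} and \eqref{XYZ}, the invariants $g_2,g_3,\eta$ are symmetric functions of the Darboux--Halphen variables $X,Y,Z$. Inverting \eqref{XYZ} gives $y^2=\tfrac13(X-Z)$, $z^2=\tfrac13(X-Y)$, $u=\tfrac13(X+Y+Z)$, and on the Jacobi surface \eqref{324}, where the Weierstrass reduction lives, also $x^2=\tfrac13(Y-Z)$; hence $g_2$ is proportional to $(X-Y)^2+(Y-Z)^2+(Z-X)^2$ and the discriminant $g_3^2-\tfrac1{27}g_2^3$ to $[(X-Y)(Y-Z)(Z-X)]^2$. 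Thus \eqref{g2g3} is the symmetric reduction of \eqref{darboux}/\eqref{main}, and its integrals must be the images, under this change, of $\boldsymbol J_1,\boldsymbol J_2$ from Proposition~\ref{P2} (equivalently of $J_1,J_2$ in Proposition~\ref{T3}); note the modulus there, $z/y=\sqrt{(X-Y)/(X-Z)}$, is precisely the quantity the Legendre functions will act on.

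To organise the transport I would introduce the modulus-like variable $s\DEF g_3 w$ with $w=(g_3^2-\tfrac1{27}g_2^3)^{-1/2}$, the analogue of Legendre's $k$ in \eqref{kI}. Differentiating along \eqref{g2g3} yields the two clean identities $\tfrac1w\tfrac{dw}{d\tau}=-\tfrac{12\,\ri}{\pi}\,\eta$ and $\tfrac{ds}{d\tau}=-\tfrac{2\,\ri}{3\pi}\,g_2^2\,w$, which play the role that $\dot k=\tfrac12 I k(1-k^2)A^2$ played for Jacobi's system; together with $1-s^2=-\tfrac1{27}g_2^3 w^2$ they show that $s$ is a legitimate reparametrising variable and that the prefactor $(g_2 w)^{-1/2}$ and the coefficient $(g_3-\tfrac23\eta g_2)w$ appearing in the statement are rational in $(g_2,g_3,\eta)$ along the flow.

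The heart of the matter is the passage $\{K,E\}\to\{P_\nu^\mu,Q_\nu^\mu\}$. The elliptic integrals $K(z/y),E(z/y)$ of Proposition~\ref{P2} are Gauss hypergeometric functions of the $\boldsymbol\Gamma(2)$-modulus; pushing them through the degree-six change to the full modular variable $s$, by the classical quadratic-and-cubic hypergeometric transformations, re-expresses them as Legendre functions of the first and second kind with indices $\nu=\tfrac12$, $\mu=\tfrac13$ --- the angles $2,3,\infty$ of the modular triangle are exactly what fix $(\nu,\mu)$. A contiguous relation, e.g.\ $(s^2-1)(P_\nu^\mu)'=(\nu+\mu)P_{\nu-1}^\mu-\nu s\,P_\nu^\mu$, in which $P_{\nu-1}^\mu=P_{-\nu}^\mu$ for $\nu=\tfrac12$, then absorbs the $E$-type term and accounts for the otherwise surprising second lower index $-\nu$ in $J_1,J_2$. \textbf{The main obstacle is precisely this step}: carrying all the algebraic prefactors through the transformation so that they collapse to exactly $(g_2 w)^{-1/2}$ with coefficient $(g_3-\tfrac23\eta g_2)w$ is the ``not so simple but straightforward'' bookkeeping the text alludes to.

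Finally, conservation can be confirmed without invoking the transformation at all, which gives the ``check''. Differentiating $J_1$ (and likewise $J_2$) along \eqref{g2g3}, I would substitute the identities for $\tfrac{ds}{d\tau}$ and $\tfrac{dw}{d\tau}$ above, use the Legendre equation displayed before the statement to eliminate $(P_\nu^\mu)''$, and apply the contiguous relation to reduce everything to $P_\nu^\mu$ and a single independent first derivative. The coefficients of the two resulting independent Legendre combinations must then vanish identically as rational functions of $(g_2,g_3,\eta)$; this finite identity is the promised ``check''. The same computation with $Q_\nu^\mu,Q_{-\nu}^\mu$ produces $J_2$, and functional independence of the pair follows from that of $P_\nu^\mu,Q_\nu^\mu$ via the Wronskian of the Legendre equation, exactly as in Propositions~\ref{T8} and~\ref{P2}.
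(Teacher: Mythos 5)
Your proposal is correct and follows essentially the same route as the paper: the paper obtains Proposition~\ref{Ig23} by transporting the $\{K,E,K',E'\}$-integrals of Proposition~\ref{T3} through the algebraic equivalence $(X,Y,Z)\rightleftarrows(\eta,g_2^{},g_3^{})$ and re-expressing them as Legendre functions with $(\nu,\mu)=\big(\tfrac12,\tfrac13\big)$, the result then being ``derived and checked'' by direct differentiation along the flow \eqref{g2g3}. Your explicit flow identities for $dw/d\tau$ and $ds/d\tau$, the contiguous-relation bookkeeping, and the Wronskian argument for independence are precisely the ``not so simple but straightforward computation'' the paper alludes to without detailing.
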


It is worth noting here that the Halphen system \eqref{darboux} and
Weierstrass' equations \eqref{g2g3} are equivalent by means of the
following transformation $(X,Y,Z)\rightleftarrows(\eta,g_2^{},g_3^{})$
\cite[p.~331]{halphen}:
\begin{alignat*}{4}
6\,\ri\,\eta&{}={}&\pi\,&(X+Y+Z)\,,\\[0.3em]
-3\,g_2^{}&{}={}&\pi^2&(X^2+Y^2+Z^2-XY-XZ-YZ)\,,\\[0.3em]
-54\,\ri\,g_3^{}&{}={}&\pi^3&(2\,X-Y-Z)(2\,Y-X-Z)(2\,Z-X-Y)\,.
\end{alignat*}
As already noted, all these systems  are rationally representable
through the $\vartheta,\eta$-constants, \ie, the phase variables of
system \eqref{var2}. Hence, in addition to \eqref{xyz}, that is the
point change \eqref{XYZ}, we obtain a `correction' of  definition
\eqref{g23}; it determines equations \eqref{g2g3} as a subsystem of
\eqref{var2}:
\begin{equation}\label{Temp}
g_2^{}=\frac{\pi^4}{12}
\big\{\vartheta_3^8-\vartheta_3^4\vartheta_4^4+\vartheta_4^8
\big\}\,,\qquad
g_3^{}=\frac{\pi^6}{432}
\big\{2\,\vartheta_3^4-\vartheta_4^4 \big\}
\big\{\vartheta_3^4+\vartheta_4^4 \big\}
\big\{2\,\vartheta_4^4-\vartheta_3^4 \big\}
\end{equation}
(no $\vartheta_2$ here). These changes entail an equivalence of
integrals: the $\{P_\nu^\mu,\,
Q_\nu^\mu,\,P_{\scalebox{0.7}[1]{$\sss{\!\!-}$}\nu}^\mu,\,
Q_{\scalebox{0.7}[1]{$\sss{\!-}$}\nu}^\mu\}$-objects can be written in
terms of $\{K,\, E,\,K',\, E'\}$ (and vice versa) by the formulae given
above. Of course, this property manifests itself in the fact that
integrals of such a kind may be rewritten solely in terms of
hypergeometric $_2F_1$-functions; we comment this point more fully in
Sect.~\ref{Oh}.

\subsubsection{On the Ramamani system\label{RR}}
By way of illustration to the theory we can also consider
Ramamani's system mentioned in Introduction. This system is satisfied
by certain function series $\mathcal{P}(\tau)$,
$\tilde{\mathcal{P}}(\tau)$, and $\mathcal{Q}(\tau)$ (see formula (3.3)
in \cite{ablowitz2} and formulae (7)--(9) in \cite{guha}). Translating
these definitions into our notation $(\eta,g_2^{},g_3^{})$, we obtain:
$$
\mathcal{P}(\tau)=\frac{4}{\pi^2}\big\{4\,\eta(2\,\tau)-
\eta(\tau)\big\}\,,
\quad
\tilde{\mathcal{P}}(\tau)=
\frac{12}{\pi^2}\big\{2\,\eta(2\,\tau)-\eta(\tau)\big\}\,,
\quad
\mathcal{Q}(\tau)=\frac{4}{5\,\pi^4}
\big\{16\,g_2^{}(2\,\tau)-g_2^{}(\tau)\big\}\,.
$$
Using the duplication rules \eqref{dubl}, notation \eqref{def}, and
definitions \eqref{Temp}, we may, as pointed out above, turn these
formulae into the rational point transformation. One obtains
\begin{equation}\label{pp}
\pi\,\ri\,\mathcal{P}=2\,(u+y^2+z^2)\,,\qquad
\pi\,\ri\,\tilde{\mathcal{P}}=3\,(y^2+z^2)\,,\qquad
\pi^2\mathcal{Q}=36\,y^2z^2
\end{equation}
and this substitution brings the main system \eqref{main} into the
system
\begin{equation}\label{ramam}
\frac{d\mathcal{P}}{d\tau}=
\frac12\,\pi\,\ri\,(\mathcal{P}^2-\mathcal{Q})\,, \qquad
\frac{d\tilde{\mathcal{P}}}{d\tau}=\pi\,\ri\,
(\mathcal{P}\,\tilde{\mathcal{P}}-\mathcal{Q})\,, \qquad
\frac{d\mathcal{Q}}{d\tau}=
2\,\pi\,\ri\,(\mathcal{P}-\tilde{\mathcal{P}})\,\mathcal{Q}\,.
\end{equation}
This is the Ramamani dynamical system \cite[p.~116]{rama},
\cite[(1.8)]{ablowitz2}, \cite[(10)]{guha}. Its theory, including the
search for conserved quantities $J_1,
J_2(\mathcal{P},\tilde{\mathcal{P}},\mathcal{Q})$ (exercise), results
from Proposition~\ref{T3} and an equivalent of substitution \eqref{pp}
written in terms of Darboux--Halphen variables reads as follows:
$$
\pi\,\ri\,\mathcal{P}=2\,X\,,\qquad
\pi\,\ri\,\tilde{\mathcal{P}}=2\,X-Y-Z\,,\qquad
\pi^2\mathcal{Q}=4\,(X-Z)(X-Y)\,.
$$
See Refs.~\cite{ablowitz2,maier} for further information about system
\eqref{ramam}.

\subsection{Action and Lagrangians}
Let us introduce the collective notation for the phase-space
coordinates: $X\DEF(A$, $B$, $a$,
$b)^{\mbox{\tiny$\sss\boldsymbol{\top}$}}$ for  system \eqref{ABab} or
$X\DEF(x$, $y$, $z$, $u)^{\mbox{\tiny$\sss\boldsymbol{\top}$}}$ for
\eqref{main}. We are looking for the action functional
\begin{equation}\label{action}
S=\int\!\!\mathcal{L}(X,\bdot X)\,d\tau
\end{equation}
in the first-order formalism. Then the most general non-singular
Lagrangian has the form
\begin{equation}\label{rho}
\mathcal{L}(X,\bdot X)=\varrho_k^{}(X)\,\bdot X^k-\mathcal{H}(X)\,,
\end{equation}
where $\mathcal{H}(X)$ is a Hamiltonian and $\varrho_k^{}(X)$ define
the symplectic potential $\varrho=\varrho_k^{}(X)\,dX^k$. As usual,
varying \eqref{action}, we get the Hamiltonian equations
\begin{equation*}\label{Omega}
\bdot X^n=\Omega^{\mathit{nk}}(X)\frac{\partial\, \mathcal{H}}
{\partial X^k}\quad\scalebox{1.5}[1]{$\Leftrightarrow$}\quad
\bO_{\mathit{kn}}(X)\bdot X^n=\frac{\partial\, \mathcal{H}}
{\partial X^k}
\qquad
\scalebox{1.5}[1]{$\Leftrightarrow$}\qquad\eqref{ABab}\,,\;
\eqref{main}\,,
\end{equation*}
where $\Omega=\bO^{\smin1}$ is the Poisson bi-vector dual to the
symplectic 2-form
$$
\bO_{\mathit{kn}}=\frac{\partial\varrho_n^{}(X)}{\partial X^k}-
\frac{\partial\varrho_k^{}(X)}{\partial X^n}\,.
$$

There is a great deal of ambiguity concerning the choice of Lagrangians
for a given system of equations. On the other hand, no general method
is known for constructing Lagrangians starting from equations of motion
(the so-called inverse problem of calculus of variations). Our
situation is, however, somewhat special as the systems under
consideration are integrable (in the sense that was explained in
sects.~\ref{S5.4} and \ref{IM}). The latter fact allows us to write
down the following ansatz for the Lagrangian:
\begin{equation}\label{LX}
\mathcal{L}(X,\bdot X)=(\bdot{\mathcal{N}}-1)\,\mathcal{H}+
I_1\,\bdot I_2\,.
\end{equation}
Besides the Hamiltonian $\mathcal{H}(X)$, it involves two additional
independent integrals of motion $I_j=I_j(X)$ and the quantity
$\mathcal{N}=\mathcal{N}(X)$ obeying condition
$\bdot{\mathcal{N}}\equiv 1$. Clearly, $\mathcal{N}$ is a linear
function of $\tau$ modulo integrals of motion, that is
$\mathcal{N}(X)=\tau+\mathrm{const}(\mathcal{H},I_1,I_2)$. We have
constructed such a function in Sect.~\ref{S5.5}. A computation, based
on \eqref{J12} followed by use of \eqref{KA}, \eqref{kI}, and
\eqref{linear}, shows that
$$
\mathcal{N}=-2\,\frac{K(k)}{A\,J_1}\qquad\Rightarrow\qquad
\frac{d\,\mathcal{N}}{dh}\equiv 1\,,
$$
where $J_1$  has been defined in \eqref{J12}. The quantity
$\mathcal{N}(X)$ thus becomes
\begin{equation*}
\mathcal{N}(X)=
-\frac{K\big(\tfrac zy\big)}{y\,\boldsymbol{J}_{\!1}}=
\frac{-K\big(\tfrac zy\big)}{(u-2\,y^2+z^2)\,K\big(\tfrac zy\big)+
3\,y^2\,E\big(\tfrac zy\big)}\qquad(\bdot{\mathcal{N}}\equiv 1)\,.
\end{equation*}
The Lagrangian density is determined up to a total $\tau$-derivative
and therefore its choice is always accompanied by some heuristic
arguments (simplicity of Lagrangians, brackets, etc.) When deriving the
objects above we made use of a `linearized' equivalent to systems
\eqref{ABab}, \eqref{main}, and equation \eqref{kI}. Therefore we
present result in terms of `mixed' phase variables. The most compact
Lagrange function we have found is given by the following statement.

\begin{theorem}\label{P3}
The systems \eqref{ABab}, \eqref{main}, and \eqref{kI} are the
Euler--Lagrange equations for the following Lagrangian  $\mathcal{L}$:
\begin{align}
\mathcal{L}&=
J_1^2\big(\bdot {\mathcal{N}}-1\big)+J_2\,\bdot I-8\,\frac{d}{d\tau}\!
\left(\!\frac{B}{A}K^2\right)
\notag\\[0.3em]
&=4\,\frac{J_1K}{A^2}\!\cdot\!\bdot A
-
2\left\{k\,IK^2+\frac{J_1^2-16B^2K^2}{k\,(k^2-1)\,IA^2} \right\}
\!\cdot\bdot{\!k}
+\left\{ J_2+\frac{2\,K}{AI}\,(J_1-4\,B\,K)\right\}\!\cdot\! \bdot
{I}-J_1^2\,.\label{L}
\end{align}
Here, we omitted indication of argument in Legendre's integral $K(k)$
and expressions for $J_1,J_2(A,B,k,I)$ are taken from
Proposition~$\ref{T8}$. Transitions between variables are described by
substitutions \eqref{change}--\eqref{change2}.
\end{theorem}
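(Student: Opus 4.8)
The plan is to recognize the displayed $\mathcal{L}$ as a concrete instance of the integrable ansatz \eqref{LX}, to verify the Euler--Lagrange equations for that ansatz in complete generality, and only afterwards to reconcile the two expressions for $\mathcal{L}$ by discarding a total $\tau$-derivative. Concretely, I would read off from the first line of \eqref{L} the identifications $\mathcal{H}=J_1^2$, $I_1=J_2$, $I_2=I$, together with the function $\mathcal{N}=-2K(k)/(A\,J_1)$ built in Sect.~\ref{S5.5}, for which $\bdot{\mathcal{N}}\equiv1$ was already established. The term $-8\,\frac{d}{d\tau}\big(\frac BA K^2\big)$ is a total derivative and therefore leaves the Euler--Lagrange equations untouched; its sole purpose is to recast the symplectic potential $\varrho=\mathcal{H}\,d\mathcal{N}+I_1\,dI_2$ into the manifestly rational-in-$(A,B,k,I)$ shape on the second line of \eqref{L}, using the derivative rules \eqref{linear} for $K,E,K',E'$ and the equations \eqref{kI}, \eqref{tk}. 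The derivative is tuned precisely to cancel the $\bdot B$-dependence: one checks $J_1^2\,\partial_B\mathcal{N}=8K^2/A$, which is exactly undone by $\partial_B\!\big({-}8\frac BA K^2\big)$, so no $\bdot B$ survives in \eqref{L}.

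The heart of the matter is to show that \eqref{LX} reproduces the dynamics whenever $\mathcal{H},I_1,I_2$ are conserved and $\bdot{\mathcal{N}}\equiv1$. Writing $\mathcal{L}=\varrho_k\bdot X^k-\mathcal{H}$ with $\varrho_k=\mathcal{H}\,\partial_k\mathcal{N}+I_1\,\partial_k I_2$, I would form the Euler--Lagrange expression $\frac{d}{d\tau}\varrho_k-\partial_k\mathcal{L}=\bO_{kn}\bdot X^n-\partial_k\mathcal{H}$. Expanding $\bO_{kn}=\partial_k\varrho_n-\partial_n\varrho_k$, the second-order (Hessian) contributions $\mathcal{H}\,\partial_k\partial_n\mathcal{N}$ and $I_1\,\partial_k\partial_n I_2$ cancel by symmetry of mixed partials, and the remainder collapses to
\begin{equation*}
\partial_k\mathcal{H}\,(1-\bdot{\mathcal{N}})+\partial_k\mathcal{N}\,\bdot{\mathcal{H}}+\partial_k I_2\,\bdot{I}_1-\partial_k I_1\,\bdot{I}_2=0 .
\end{equation*}
Each of the four scalar factors $1-\bdot{\mathcal{N}}$, $\bdot{\mathcal{H}}$, $\bdot{I}_1$, $\bdot{I}_2$ vanishes on the flow of \eqref{kI} by construction, so the intended trajectory solves the Euler--Lagrange equations irrespective of the sign convention fixing $\bdot{\mathcal{N}}$.

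For the converse --- that the Euler--Lagrange equations \emph{force} the system rather than merely admit it --- I would invoke non-degeneracy of $\bO_{kn}$. Since $\varrho=\mathcal{H}\,d\mathcal{N}+I_1\,dI_2$ gives $\bO=d\mathcal{H}\wedge d\mathcal{N}+dI_1\wedge dI_2$, and the four functions $(\mathcal{N},\mathcal{H},I_1,I_2)=(\mathcal{N},J_1^2,J_2,I)$ are functionally independent (Propositions~\ref{P5} and \ref{T8} give independence of the three integrals away from $J_1=0$, while $\mathcal{N}$ is transverse to them because $\bdot{\mathcal{N}}\equiv1$ whereas the others are constant along the flow), the form $\bO$ is invertible and $\bdot X^n=\Omega^{nk}\partial_k\mathcal{H}$ determines the flow uniquely; in these coordinates it is exactly $\bdot{\mathcal{N}}=1$, $\bdot{\mathcal{H}}=\bdot{I}_1=\bdot{I}_2=0$, i.e.\ system \eqref{kI}. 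It then remains only to transport the result to \eqref{ABab} and \eqref{main}: because the first-order action $\int(\varrho-\mathcal{H}\,d\tau)$ is built from a $1$-form $\varrho$ and a scalar $\mathcal{H}$, it is covariant under invertible changes of phase variables, so the substitution \eqref{ab} carrying $(A,B,k,I)\to(A,B,a,b)$ and the point transformation \eqref{change}--\eqref{change2} carrying $(A,B,k,I)\to(x,y,z,u)$ map these Euler--Lagrange equations onto those of \eqref{ABab} and \eqref{main}, respectively.

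I expect the principal obstacle to be neither the conceptual verification above nor the covariance argument, but the bookkeeping that reconciles the two displayed forms of $\mathcal{L}$: expanding $J_1^2\,d\mathcal{N}+J_2\,dI$ with $\mathcal{N}=-2K/(A\,J_1)$, differentiating the elliptic integrals through \eqref{linear}, and absorbing the total derivative so that the coefficients of $\bdot A$, $\bdot{\!k}$, $\bdot{I}$ assume the compact rational-times-$K^2$ form shown. This computation is lengthy but mechanical, and it is the step most prone to sign and normalization slips --- in particular the constant $\frac{\pi}{2}$ entering through the Legendre identity of Proposition~\ref{P5} and the factor $12\,\ri\,\bI^2=I$ linking the two variable sets.
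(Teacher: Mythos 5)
Your proposal is correct and follows essentially the paper's own route: the paper proves Theorem~\ref{P3} implicitly by constructing exactly the ingredients you use --- the ansatz \eqref{LX} instantiated with $\mathcal{H}=J_1^2$, $I_1=J_2$, $I_2=I$ and $\mathcal{N}=-2K/(A J_1)$ from Sect.~\ref{S5.5} --- and omits the remaining computations, which you carry out (correctly, e.g.\ the cancellation $J_1^2\,\partial_B\mathcal{N}=8K^2\!/A$ against the total derivative, so no $\bdot B$ survives). Your added non-degeneracy and point-transformation covariance arguments are consistent with what the paper establishes later (the determinant formula \eqref{det} and Remark~\ref{R3}), so they complete rather than diverge from the paper's argument.
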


\section{Poisson structures\label{S4}}
\noindent The following statement characterizes a non-trivial property
of the dynamical systems under consideration.

\begin{theorem}\label{T4}
Whatever the Hamilton function $\mathcal{H}(X)$ is chosen $($single- or
multi-valued analytic function\/$)$, none of  systems \eqref{var},
\eqref{ABab}, or \eqref{var2}, \eqref{main} does admit a constant
non-degenerate Poisson bracket $\Omega$.
\end{theorem}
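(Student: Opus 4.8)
The plan is to use Liouville's theorem as an obstruction. Write any of the four systems as $\bdot X^{n}=V^{n}(X)$ on the four-dimensional phase space, and suppose it were Hamiltonian for some \emph{constant} bivector $\Omega^{nk}$, i.e. $V^{n}=\Omega^{nk}\,\partial\mathcal{H}/\partial X^{k}$. Then
\begin{equation*}
\sum_{n}\frac{\partial V^{n}}{\partial X^{n}}
=\sum_{n,k}\Omega^{nk}\,
\frac{\partial^{2}\mathcal{H}}{\partial X^{n}\,\partial X^{k}}=0 ,
\end{equation*}
because a \emph{constant} antisymmetric $\Omega^{nk}$ is contracted with the symmetric Hessian of $\mathcal{H}$. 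This computation uses nothing but the equality of the mixed second partials of $\mathcal{H}$; hence it survives verbatim for a multi-valued analytic Hamiltonian (restrict to any single-valued local branch) and it never uses invertibility of $\Omega$. Consequently a system admitting \emph{any} constant Poisson structure---degenerate or not, with single- or multi-valued $\mathcal{H}$---must be divergence-free in the affine phase coordinates. The whole proof then reduces to exhibiting a non-vanishing phase-space divergence for each of the four systems, which is a purely local (pointwise) statement and therefore immune to the multivaluedness of any hypothetical $\mathcal{H}$.

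First I would differentiate the right-hand sides of the canonical system \eqref{main}, obtaining
\begin{equation*}
\sum_{n}\frac{\partial V^{n}}{\partial X^{n}}
=(u+y^{2}+z^{2})+(u+3\,y^{2}-2\,z^{2})
+(u-2\,y^{2}+3\,z^{2})+2\,u=5\,u+2\,y^{2}+2\,z^{2},
\end{equation*}
which is not identically zero. The same step for Jacobi's system \eqref{ABab}, read off in the variable $h$ (which differs from $\tau$ only by the constant factor $\tfrac14\pi\ri$ and so cannot change whether a divergence vanishes), gives $4\,AB+a\,A^{2}$, again not identically zero.

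For the theta-systems the organization by the quasi-homogeneous weights $w(x)=w(y)=w(z)=1$, $w(u)=2$ makes the computation transparent, since each vector field is then homogeneous and the $\vartheta^{4}$-contributions cancel cleanly. For the symmetrical system \eqref{var} the four diagonal derivatives sum to $\tfrac{7\ri}{\pi}\,\eta$, whereas for the modification \eqref{var2} they sum to $\tfrac{\ri}{\pi}\bigl\{7\,\eta+\tfrac{\pi^{2}}{3}(\vartheta_{3}^{4}+\vartheta_{4}^{4})\bigr\}$; neither vanishes identically. Each of these four divergences, being a nonzero function of the phase variables, contradicts the necessary condition of the first paragraph, so none of the systems can be written as $\bdot X^{n}=\Omega^{nk}\,\partial\mathcal{H}/\partial X^{k}$ with a constant $\Omega^{nk}$; in particular no constant \emph{non-degenerate} bracket exists, which is the assertion. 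I do not anticipate a genuine obstacle: the only delicate point is that multivaluedness of $\mathcal{H}$ cannot rescue the situation, and this is disposed of by passing to a local analytic branch, where the vanishing-divergence identity is classical; everything else is routine differentiation.
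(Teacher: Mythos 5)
Your proof is correct, and all four divergence computations check out ($5u+2y^{2}+2z^{2}$ for \eqref{main}, $4AB+aA^{2}$ for \eqref{ABab}, $\tfrac{7\ri}{\pi}\eta$ for \eqref{var}, and $\tfrac{\ri}{\pi}\bigl\{7\eta+\tfrac{\pi^{2}}{3}(\vartheta_{3}^{4}+\vartheta_{4}^{4})\bigr\}$ for \eqref{var2}), but your route is genuinely different from the paper's. The paper writes the hypothesis as $\bO\,\bdot X=\nabla\mathcal{H}$ with $\bO=\Omega^{\smin1}$ constant and imposes the full matrix integrability (closedness) condition $\nabla_{\!n}(\bO_{\mathit{kj}}V^{j})=\nabla_{\!k}(\bO_{\mathit{nj}}V^{j})$, i.e.\ symmetry of $\bO\cdot\partial_{X}V$ for all $X$; it then solves this linear system in the six independent entries of $\bO$ and finds it forces $\bO\equiv 0$, contradicting invertibility. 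Your Liouville-type argument is exactly the \emph{trace} of that condition: if $\bO\cdot\partial_X V$ is symmetric with $\bO$ invertible, then $\partial_X V=\Omega S$ with $S$ symmetric, so $\operatorname{div}V=\mathrm{tr}(\Omega S)=0$. What your version buys is economy and a slightly stronger conclusion: a single scalar computation per system replaces a six-parameter linear-algebra analysis, and since your identity $\operatorname{div}(\Omega\nabla\mathcal{H})=0$ never uses invertibility of $\Omega$, you rule out \emph{degenerate} constant brackets as well (the paper's formulation presupposes non-degeneracy from the start). What the paper's version buys is structural information reused later: the matrix condition, not its trace, is what generalizes to the equations \eqref{int2} for non-constant brackets and to Theorem~\ref{T6}, and it yields the sharper classification ``compatible if and only if $\bO\equiv 0$.'' One small point worth making explicit in your write-up: after restricting to a local branch of a multi-valued $\mathcal{H}$, the identity $\operatorname{div}V=0$ holds on an open set, and since each of the four divergences is a polynomial in the phase variables, vanishing on an open set would force identical vanishing --- which is exactly what your computations exclude.
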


\begin{proof}
Denote by $X$ the phase-space coordinate vector for any of these
systems: $\bdot X^j=V^j(X)$. Assuming the availability of the form
$\bO\,\bdot X=\nabla\mathcal{H}(X)$ with a constant matrix $\bO$, we
apply integrability condition  to  equations
$\nabla\mathcal{H}=\bO\,\bdot X$, considered now as equations for the
Hamiltonian $\mathcal{H}$:
\begin{equation}\label{int}
\nabla_{\!k}\mathcal{H}=\bO_{\mathit{kj}}\,V^j\quad\Rightarrow\quad
\nabla_{\!n}(\bO_{\mathit{kj}}\,V^j)=
\nabla_{\!k}(\bO_{\mathit{nj}}\,V^j)
\quad\Rightarrow\quad
\bO_{\mathit{kj}}\,\nabla_{\!n} V^j=
\bO_{\mathit{nj}}\,\nabla_{\!k} V^j\,.
\end{equation}
It follows that \mbox{$\bO\!\cdot\!\partial_{\mbox{\tiny$X$}}\! V$}
must be a symmetric matrix for all $X$. Straightforward computations
show that this property is compatible with vector fields $V$'s defining
the systems \eqref{var}, \eqref{ABab}, and \eqref{var2}, \eqref{main}
if and only if $\bO\equiv 0$.
\end{proof}

This proof gives in fact a criteria for availability of a canonical
symplectic form (given coordinates) and absence of such a bracket
suggests to look for non-canonical one. Insomuch as \eqref{H} is the
only single-valued function integral, we have to take it (or function
of it) as a Hamiltonian. Furthermore, equations of motion do not depend
on choice of the Lagrangian $\mathcal{L}$ but  bracket $\Omega$ does;
even though the Hamilton function $\mathcal{H}(X)$ and coordinates $X$
have been fixed. We thus have to choose in \eqref{LX}, except for
$\mathcal{H}(X)$, the two independent integrals $I_1, I_2(X)$ in order
that the bracket $\Omega$ be simplest. We  put
\begin{equation}\label{L12}
\mathcal{L}=(\bdot {\mathcal{N}}-1)\,\mathcal{H}+(\lambda
\boldsymbol{J}_{\!1})^{\smin1}\;\,\bdot{\!\!\!\boldsymbol{J}}_{\!2}\,,
\end{equation}
where $\lambda$ is an arbitrary constant. Formulae of the previous
section contain all what we need for computation of the bracket
$\Omega$.

\begin{lemma}\label{P4}
Having  integrals of motion $\mathcal{H}$, $I_k$, and the object
$\mathcal{N}$, the Poisson bi-vector $\bO$ is calculated by the
following computational rule:
$$
\bO=\boldsymbol{M}-\boldsymbol{M}^{\mbox{\tiny$\sss\boldsymbol{\top}$}}
\,,\qquad
\boldsymbol{M}_{\!\mathit{kn}}=\nabla_{\!k}\,\mathcal{H}
\cdot\nabla_{\!n}\,\mathcal{N}
+\,
\nabla_{\!k}\,I_1\cdot\nabla_{\!n}\,I_2\,.
$$
\end{lemma}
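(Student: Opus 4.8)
The plan is to read off the Poisson bi-vector directly from the first-order Lagrangian in the form \eqref{LX}, \eqref{L12}. Starting from a Lagrangian $\mathcal{L}=\varrho_k^{}(X)\,\bdot X^k-\mathcal{H}(X)$ of the type \eqref{rho}, the earlier discussion already fixes the recipe: the symplectic two-form is $\bO_{\mathit{kn}}=\partial_k\varrho_n-\partial_n\varrho_k$, and the equations of motion come out as $\bO_{\mathit{kn}}\bdot X^n=\nabla_k\mathcal{H}$. So the whole task reduces to identifying the symplectic potential $\varrho_k$ hidden in the kinetic term of \eqref{LX} and then antisymmetrizing its gradient.

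First I would expand the velocity-dependent part of the Lagrangian. Writing $\mathcal{L}=(\bdot{\mathcal{N}}-1)\,\mathcal{H}+I_1\,\bdot I_2$ and using $\bdot{\mathcal{N}}=\nabla_n\mathcal{N}\cdot\bdot X^n$, $\bdot I_2=\nabla_n I_2\cdot\bdot X^n$, the coefficient of $\bdot X^n$ is exactly
\begin{equation*}
\varrho_n^{}=\mathcal{H}\,\nabla_n\mathcal{N}+I_1\,\nabla_n I_2\,,
\end{equation*}
while the remaining $-\mathcal{H}$ plays the role of the (effective) Hamiltonian potential, up to the total-derivative ambiguity noted in the text. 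Next I would apply $\bO_{\mathit{kn}}=\nabla_k\varrho_n-\nabla_n\varrho_k$. Differentiating $\varrho_n$ produces $\nabla_k\mathcal{H}\cdot\nabla_n\mathcal{N}+\mathcal{H}\,\nabla_k\nabla_n\mathcal{N}+\nabla_k I_1\cdot\nabla_n I_2+I_1\,\nabla_k\nabla_n I_2$. The two Hessian terms, $\mathcal{H}\,\nabla_k\nabla_n\mathcal{N}$ and $I_1\,\nabla_k\nabla_n I_2$, are symmetric in $(k,n)$, so they cancel upon forming $\bO_{\mathit{kn}}=\nabla_k\varrho_n-\nabla_n\varrho_k$. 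What survives is precisely
\begin{equation*}
\bO_{\mathit{kn}}=\bigl(\nabla_k\mathcal{H}\cdot\nabla_n\mathcal{N}+\nabla_k I_1\cdot\nabla_n I_2\bigr)-\bigl(\nabla_n\mathcal{H}\cdot\nabla_k\mathcal{N}+\nabla_n I_1\cdot\nabla_k I_2\bigr),
\end{equation*}
which is exactly $\boldsymbol{M}-\boldsymbol{M}^{\mbox{\tiny$\sss\boldsymbol{\top}$}}$ with $\boldsymbol{M}_{\!\mathit{kn}}=\nabla_k\mathcal{H}\cdot\nabla_n\mathcal{N}+\nabla_k I_1\cdot\nabla_n I_2$, matching the claimed identity (after the relabeling $I_1,I_2\mapsto\boldsymbol{J}_{\!1},\boldsymbol{J}_{\!2}$ of \eqref{L12}, with the constant $\lambda$ and the $\boldsymbol{J}_{\!1}^{-1}$ factor absorbed into the definition of the integrals).

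The computation is essentially forced once the symplectic potential is read off, so I do not expect a genuine obstacle at the level of algebra; the Hessian cancellation is the whole mechanism. The one point deserving care is that the lemma asserts $\bO$ is a \emph{Poisson} bi-vector, not merely an antisymmetric matrix: strictly one must check that $\Omega=\bO^{\smin1}$ satisfies the Jacobi identity. Here I would argue that closedness of the two-form $d\varrho=0$ is automatic since $\bO=d\varrho$ is exact by construction, and the nondegeneracy needed to invert $\bO$ on the solution locus is guaranteed by the functional independence of $\mathcal{H},\mathcal{N},I_1,I_2$ established in Sect.~\ref{S5.5} and Proposition~\ref{P2}. Thus the only thing to verify beyond the symmetric-part cancellation is this independence, which makes $\{\mathcal{H},\mathcal{N},I_1,I_2\}$ a local coordinate system in which $\varrho$ is manifestly a symplectic potential.
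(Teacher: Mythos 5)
Your proof is correct and follows exactly the route the paper intends: the paper states Lemma~\ref{P4} without a separate proof precisely because it is the immediate computation you perform, namely reading off the symplectic potential $\varrho_n=\mathcal{H}\,\nabla_{\!n}\mathcal{N}+I_1\,\nabla_{\!n}I_2$ from the ansatz \eqref{LX}/\eqref{L12} and antisymmetrizing via \eqref{rho}, with the symmetric Hessian terms cancelling. Your closing remarks on closedness (exactness of $\bO=d\varrho$) and on nondegeneracy via functional independence of $\{\mathcal{H},\mathcal{N},I_1,I_2\}$ are consistent with the paper's framework and add nothing that conflicts with it.
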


Now, we insert here
\begin{equation}\label{I12}
I_1\DEF(\lambda \boldsymbol{J}_{\!1})^{\smin1}\,,
\qquad I_2\DEF\boldsymbol{J}_{\!2}
\end{equation}
and use Proposition~\ref{P5}.

\begin{theorem}\label{T5}
Denote $X\DEF(x,y,z,u)^{\mbox{\tiny$\sss\boldsymbol{\top}$}}$. Then
\begin{enumerate}
\item[(1)]
Dynamical system \eqref{main} is Hamiltonian:
$$
\bdot X=\omega\,\nabla\mathcal{H}\,,\qquad\mathcal{H}=
\frac12\,\frac{y^2-z^2}{x^2}\,,
$$
where the degenerate rational $($single-valued\/$)$ Poisson bracket is
as follows
$$
\setlength{\arraycolsep}{0.35em}
\omega=\frac{x}{2\,\mathcal{H}}\!\left(\!\!\!
\begin{array}{cccc}
0&(u{+}y^2{-}2z^2)\,y&(u{-}2y^2{+}z^2)\,z
&u^2{-}y^4{+}y^2z^2{-}z^4\\[0.4em]
{-}(u{+}y^2{-}2z^2)\,y&0&0&0\\[0.4em]
{-}(u{-}2y^2{+}z^2)\,z&0&0&0\\[0.4em]
{-}u^2{+}y^4{-}y^2z^2{+}z^4&0&0&0\\[0.4em]
\end{array} \!\!\right)\!.
$$
\item[(2)]
Non-degenerate but transcendental multi-valued extension of the
$\omega$ is given by the bracket
$$
\Omega=\omega+\lambda\,\tilde\omega\qquad(\mathrm{det}\,\Omega\ne0)\,,
$$
where
$$
\tilde\omega=\frac{2}{\pi}\,K^2\!\left(\!\!\!
\begin{array}{cccc}
0&\Mfrac xy\,z^2&x\,z &x\,M_1\\[0.4em]
-\Mfrac xy\,z^2&0&\Mfrac zy\,(y^2-z^2)&\Mfrac 1y\,M_2\\[2mm]
-x\,z&\Mfrac zy\,(z^2-y^2)&0&z\,M_3\\[0.4em]
-x\,M_1&-\Mfrac 1y\,M_2&-z\,M_3&0\\[0.4em]
\end{array} \!\!\right)
$$
and
$$
M_1\DEF 3\,y^2(EK^{\smin1}-1)^2-z^2\,,\qquad
M_3\DEF y^2(3\,E^2K^{\smin2}-1)+z^2\,,
$$
$$
M_2\DEF 3\,y^4(EK^{\smin1}-1)^2+y^2z^2(6\,EK^{\smin1}-5)+2\,z^4\,.
$$
\item[(3)]
The matrix $\tilde\omega$ is a bracket as well with
$\det\tilde\omega=0$. The brackets $\omega$, $\tilde\omega$ are
compatible to each other and have the following Casimir's
functions:
$$
\omega\,\nabla \boldsymbol{J}_{\!1}=\omega\,\nabla\boldsymbol{J}_{\!2}
\equiv0\,,\qquad
\tilde\omega\,\nabla \mathcal{H}=\tilde\omega\,\nabla\mathcal{N}
\equiv0\,.
$$
The system may thus be treated as  bi-Hamiltonian in the sense of
Magri \textup{\cite{magri}}.
\end{enumerate}
\end{theorem}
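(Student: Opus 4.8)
The plan is to derive all three parts from one structural observation about the Lagrangian \eqref{L12}, reserving explicit matrix work for the very last step. The symplectic potential read off from \eqref{L12} is $\varrho=\mathcal{H}\,d\mathcal{N}+(\lambda\boldsymbol{J}_1)^{-1}d\boldsymbol{J}_2$, so Lemma~\ref{P4} produces the $2$-form
\[
\bO_\lambda=d\varrho=d\mathcal{H}\wedge d\mathcal{N}-\lambda^{-1}\,\boldsymbol{J}_1^{-2}\,d\boldsymbol{J}_1\wedge d\boldsymbol{J}_2,
\]
which is exact, hence closed, for every value of $\lambda$. Two facts follow at once. First, contracting $\bO_\lambda$ with the flow $V$ of \eqref{main} and using that $\mathcal{H},\boldsymbol{J}_1,\boldsymbol{J}_2$ are integrals (Proposition~\ref{P2}) while $\dot{\mathcal{N}}\equiv1$, all but one term vanish and one obtains $\Omega_\lambda\nabla\mathcal{H}=V$ for every $\lambda$, where $\Omega_\lambda=\bO_\lambda^{-1}$. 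Second, since $\bO_\lambda$ is a closed non-degenerate $2$-form wherever $\det\bO_\lambda\ne0$, its inverse $\Omega_\lambda$ automatically satisfies the Jacobi identity---closedness of the form being literally the Jacobi identity for the dual bivector. This already yields the non-degenerate multivalued bracket of part~(2).

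To extract the splitting $\Omega_\lambda=\omega+\lambda\tilde\omega$ I would use the four-dimensional inversion formula $\Omega_\lambda^{ij}=(2\,\mathrm{Pf}\,\bO_\lambda)^{-1}\varepsilon^{ijkl}(\bO_\lambda)_{kl}$. Both summands of $\bO_\lambda$ are decomposable $2$-forms, so each has vanishing Pfaffian and $\mathrm{Pf}\,\bO_\lambda=-\lambda^{-1}\big\langle d\mathcal{H}\wedge d\mathcal{N},\ \boldsymbol{J}_1^{-2}\,d\boldsymbol{J}_1\wedge d\boldsymbol{J}_2\big\rangle$ is purely the ($\lambda$-independent) cross term. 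With numerator affine in $\lambda^{-1}$ and denominator linear in $\lambda^{-1}$, the quotient $\Omega_\lambda$ is affine-linear in $\lambda$, and its two pieces are precisely the $\varepsilon$-duals of $\boldsymbol{J}_1^{-2}\,d\boldsymbol{J}_1\wedge d\boldsymbol{J}_2$ and of $d\mathcal{H}\wedge d\mathcal{N}$. This identifies $\omega$ and $\tilde\omega$ and delivers part~(3) for free: a rank-two bivector in four dimensions has vanishing Pfaffian, whence $\det\tilde\omega=0$ (and $\det\omega=0$); and the contraction identity $\varepsilon^{ijkl}\partial_k f\,\partial_l g\,\partial_j f\equiv0$ gives at once $\omega\nabla\boldsymbol{J}_1=\omega\nabla\boldsymbol{J}_2\equiv0$ and $\tilde\omega\nabla\mathcal{H}=\tilde\omega\nabla\mathcal{N}\equiv0$. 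Part~(1) then also follows, since $\Omega_\lambda\nabla\mathcal{H}=V$ together with $\tilde\omega\nabla\mathcal{H}=0$ forces $\omega\nabla\mathcal{H}=V$, reproducing \eqref{main}; I would additionally confirm this by a one-line contraction of the displayed $\omega$ with $\nabla\mathcal{H}$, the only nontrivial cancellation being $(u{+}y^2{-}2z^2)y^2-(u{-}2y^2{+}z^2)z^2=(y^2{-}z^2)(u{+}y^2{+}z^2)$.

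Compatibility costs essentially nothing more. We have shown $\Omega_\lambda$ is Poisson for every $\lambda$ in the infinite set where $\det\bO_\lambda\ne0$, so the trivector $[\Omega_\lambda,\Omega_\lambda]=[\omega,\omega]+2\lambda[\omega,\tilde\omega]+\lambda^2[\tilde\omega,\tilde\omega]$, being a quadratic polynomial in $\lambda$ that vanishes at infinitely many points, must vanish identically. Reading off its three coefficients gives $[\omega,\omega]=[\tilde\omega,\tilde\omega]=0$, so each of $\omega,\tilde\omega$ is separately a Poisson bracket, and $[\omega,\tilde\omega]=0$, which is exactly the Magri compatibility condition for the pencil. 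This settles the bracket content of parts~(2) and~(3).

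The one genuinely laborious step---and the only real obstacle---is matching this abstract description with the explicit entries displayed in the theorem. Here I would compute $\nabla\mathcal{N}$ and $\nabla\boldsymbol{J}_1,\nabla\boldsymbol{J}_2$ in the coordinates $(x,y,z,u)$, differentiating $K,E,K',E'$ at argument $k=z/y$ by means of Proposition~\ref{P5}, and then assemble the $\varepsilon$-contractions to recognise the prefactors $x/(2\mathcal{H})$ and $\tfrac2\pi K^2$ together with the polynomial entries $M_1,M_2,M_3$. This bookkeeping is lengthy but entirely mechanical: the Legendre-type identity $\boldsymbol{J}_1K'-\boldsymbol{J}_2K=\tfrac32\pi y$ of Proposition~\ref{P2} is what collapses the otherwise unwieldy $E,E'$-dependence, and the single cross-term density $\langle\,\cdot\,,\,\cdot\,\rangle$ is what redistributes into the two different-looking scalar factors. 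No further conceptual input is needed beyond the closedness of $\bO_\lambda$ and the $\lambda$-polynomial argument above.
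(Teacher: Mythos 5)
Your proposal is correct, and it rests on the same foundation as the paper's own derivation: the Lagrangian \eqref{L12} with the integrals $\boldsymbol{J}_{\!1},\boldsymbol{J}_{\!2}$ of Proposition~\ref{P2} and the linearizing quantity $\mathcal{N}$, fed into Lemma~\ref{P4} --- which is exactly your statement $\bO_\lambda=d\varrho$ for $\varrho=\mathcal{H}\,d\mathcal{N}+(\lambda\boldsymbol{J}_{\!1})^{\smin1}d\boldsymbol{J}_{\!2}$. Where you genuinely differ is in how parts (2)--(3) are certified. The paper inverts $\bO_\lambda$ by direct computation (using Proposition~\ref{P5} to differentiate $K,E,K',E'$) and essentially asserts the degeneracies, the Casimir relations and Magri compatibility; the $\varepsilon$-dual structure you exploit surfaces there only a posteriori, in the post-theorem Nambu discussion where $\omega^{jk}=\sqrt{\det\Omega\,}\,\varepsilon^{jk\ell n}\nabla_{\!\ell}I_2\nabla_{\!n}I_1$ is recorded. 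You obtain these features a priori: exactness of $\bO_\lambda$ gives the Jacobi identity for its inverse wherever it is non-degenerate; the four-dimensional Pfaffian inversion formula plus decomposability of the two summands of $\bO_\lambda$ forces the inverse to be affine in $\lambda$ and identifies its two pieces as $\varepsilon$-duals of $\boldsymbol{J}_{\!1}^{\smin2}d\boldsymbol{J}_{\!1}\wedge d\boldsymbol{J}_{\!2}$ and $d\mathcal{H}\wedge d\mathcal{N}$, whence $\det\omega=\det\tilde\omega=0$ and all four Casimir identities are immediate; and the vanishing of the $\lambda$-quadratic Schouten bracket $[\Omega_\lambda,\Omega_\lambda]$ at infinitely many $\lambda$ yields $[\omega,\omega]=[\tilde\omega,\tilde\omega]=[\omega,\tilde\omega]=0$, i.e.\ both separate Jacobi identities and compatibility in one stroke. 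This buys conceptual economy: the only labour left is matching the explicit entries ($M_1,M_2,M_3$ and the prefactors), which is precisely the bookkeeping the paper also suppresses, and your one-line verification of part (1) via the cancellation $(u+y^2-2z^2)\,y^2-(u-2y^2+z^2)\,z^2=(y^2-z^2)(u+y^2+z^2)$ is correct. Two cosmetic points to fix: with the convention $\bO\wedge\bO=2\,\mathrm{Pf}(\bO)\,d^4X$ the inversion formula carries a minus sign, $(\bO^{\smin1})^{ij}=-(2\,\mathrm{Pf}\,\bO)^{\smin1}\varepsilon^{ijkl}\bO_{kl}$; and $\mathrm{Pf}\,\bO_\lambda$ is not itself $\lambda$-independent but proportional to $\lambda^{\smin1}$ (only the cross-term density is $\lambda$-free), which is in fact what your ``numerator affine, denominator linear in $\lambda^{\smin1}$'' count uses.
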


Incidentally it should be observed that  degenerate but well-defined
rational bracket $\omega$ is obtained from non-degenerate but
multi-valued bracket $\Omega$ by a passage to the limit $\lambda\to 0$
in transcendental part of the $\Omega$. This procedure can be
interpreted as a formal separability of canonically conjugated pairs
$(\mathcal{H},\mathcal{N})$ and
$(\boldsymbol{J}_{\!1},\boldsymbol{J}_{\!2})$ in Lagrangian
\eqref{L12}. Their commutation relations (algebra of integrals) are
standard:
$$
\big\{\mathcal{H},\,\boldsymbol{J}_{\!1}\big\}_\Omega=
\big\{\mathcal{H},\,\boldsymbol{J}_{\!2}\big\}_\Omega=0, \qquad
\big\{\boldsymbol{J}_{\!2},\,
(\lambda\,\boldsymbol{J}_{\!1})^{\smin1}\big\}_\Omega=1\,.
$$

\begin{remark}\label{R3}
An explicit analog of Theorem~\ref{T5} for Jacobi's system \eqref{ABab}
is obtained with avail of transformation law for tensor
$\Omega(x,y,z,u)\mapsto\widetilde\Omega(A,B,a,b)$ under the coordinate
change $X\DEF(x,y,z,u)^{\mbox{\tiny$\sss\boldsymbol{\top}$}}\mapsto
(A,B,a,b)^{\mbox{\tiny$\sss\boldsymbol{\top}$}}\FED Y$ defined by
Theorem~\ref{T2}. Coordinate form of the transformations reads
$$
\widetilde\Omega^{jp}(Y)=\frac{\partial Y^j}{\partial X^n}\,
\frac{\partial Y^p}{\partial X^m}\,\Omega^{nm}(X)\qquad\Rightarrow\qquad
\widetilde\Omega=\boldsymbol{T}\,\Omega\,
\boldsymbol{T}^{\mbox{\tiny$\sss\boldsymbol{\top}$}}\,,
\quad\boldsymbol{T}_{kn}\DEF\frac{\partial Y^k}{\partial X^n}
$$
and implies equations
$$
\bdot Y^j=\widetilde\Omega^{jp}(Y)\frac{\partial \mathcal{H}}
{\partial Y^p}\qquad
\scalebox{1.5}[1]{$\Leftrightarrow$}\qquad\eqref{ABab}\,.
$$
We do not display here the explicit formulae  since we were unable to
find the compact form to them.
\end{remark}

It is interesting to note that in addition to the algebraic integral
\eqref{I} and the rational (but degenerate) Poisson bi-vector, systems
\eqref{ABab} and \eqref{main} admit a symmetry given by the linear
vector field
$$
\widehat{G}=2\,x\,\partial_x=
A\,\partial_A-B\,\partial_B-2\,a\,\partial_a-4\,b\,\partial_b\,.
$$
This vector field is of course non-Hamiltonian for otherwise it would
be generated by a new integral. The absence of rational integrals of
motion other than $I$ implies that the result of action of
$\widehat{G}$ on $I$ should be a function of $I$. Indeed, one can check
that
$$
\widehat{G}I=-4\,I\,.
$$
Also, once the Hamiltonian form for a dynamical system has been found,
we can determine its invariant volume form $\boldsymbol{V} =
\sqrt{\mathrm{det}\,\bO}\;d^4X$. Calculating determinant of the matrix
$\Omega$, we obtain that
\begin{equation}\label{det}
\mathrm{det}\,\Omega=\frac{4}{\pi^2}\,\lambda^2\boldsymbol{J}_{\!1}^4
\cdot x^6 y^2z^2
\end{equation}
and,  as in the case of Halphen's system \eqref{darboux}
\cite{nutku,chakr}, the volume form is a polynomial function:
$$
\frac{1}{x^3y\,z}\;dx\,dy\,dz\,du\cong \boldsymbol{V}\cong
\frac{1}{A^2b}\;dA\,dB\,da\,db\,.
$$
Clearly, the invariant volume is not unique as one is free to multiply
it on any positive function of integrals.

Let us also comment on the relationship of the degenerate Poisson
structure $\omega$ to the Nambu structure\footnote{The comment has been
added following a suggestion of the anonymous referee who we wish to
thank for that.}. The general Nambu 4-bracket in a four dimensional
space  reads \cite{Nambu,Takh}
\begin{equation}\label{n4}
\{f_1^{},f_2^{},f_3^{},f_4^{}\}=\Xi^{\smin1}(X)\,\varepsilon^{jk\ell n}\,
\nabla_{\!j}f_1^{}\,
\nabla_{\!k}f_2^{}\,\nabla_{\!\ell} f_3^{}\,\nabla_{\!n}f_4^{} \,,
\end{equation}
where  multiplier $\Xi(X)$ transforms as a density  and $\varepsilon$
is the Levi--Civita symbol with $\varepsilon^{1234}=1$. Setting
$\Xi=\sqrt{\det\mathbf{\Omega}}$ times a function of integrals,  one
can see that the rational Poisson bracket $\omega$ appearing  in
Theorem~\ref{T5} is a reduction of the Nambu 4-bracket with respect to
the pair of transcendental integrals $I_1$, $I_2$:
$$
\{f_1^{},\, f_2^{}\}_\omega =  \{f_1^{},\,f_2^{},\,I_2,\,I_1\}\, .
$$
Now the dynamical system \eqref{main} can be viewed as a generalized
Nambu mechanics with the 4-bracket (\ref{n4}) and the triple of
Hamiltonians $\mathcal{H}$, $I_1$, $I_2$ (two of which are
transcendental):
$$
\bdot X=\{X,\mathcal{H},I_2,I_1\} \qquad \Leftrightarrow \qquad \bdot
X=\{X,\mathcal{H}\}_\omega\,.
$$
More explicitly,
\begin{align}\notag
\omega^{\mathit{jk}}&
=\sqrt{\mathrm{det}\,\Omega\,}\,
\varepsilon^{\mathit{jk\ell n}}\cdot\nabla_{\!\ell}I_2\cdot
\nabla_{\!n}I_1=\cdots\\
\intertext{and expression \eqref{det} leads to a
polynomial character of the Nambu bracket:}
&\cdots=\frac{2}{\pi}\,x^3y\,z\,
\varepsilon^{\mathit{jk\ell n}}\cdot\nabla_{\!\ell}
\boldsymbol{J}_{\!1}\cdot
\nabla_{\!n}\boldsymbol{J}_{\!2}\,,\notag
\end{align}
where the integrals $I_k$ and $\boldsymbol{J}_{\!k}$ have been
determined in Eqs.~\eqref{Jnew12} and \eqref{I12}.

We conclude the section with general remarks concerning other
non-constant brackets. All of them are obtainable from each other by
general transformation of the quantities appearing in Lagrangian
\eqref{L12}:
\begin{equation}\label{free}
\!\!\!N\mapsto N+
F_1(H,\boldsymbol{J}_{\!1},\boldsymbol{J}_{\!2})\,,\:
H\mapsto
F_2(H,\boldsymbol{J}_{\!1},\boldsymbol{J}_{\!2})\,,\:
\boldsymbol{J}_{\!2}\mapsto
F_3(H,\boldsymbol{J}_{\!1},\boldsymbol{J}_{\!2})\,,\:
\boldsymbol{J}_{\!2}\mapsto
F_4(H,\boldsymbol{J}_{\!1},\boldsymbol{J}_{\!2})
\end{equation}
(re-normalization of integration constants). This defines a function
freedom of the three variables $(\alpha,\beta,\gamma)
\simeq(H,\boldsymbol{J}_{\!1},\boldsymbol{J}_{\!2})$. On the other
hand, all the dependencies $\Omega(X)$, including possible change of
the Hamilton function $\mathcal{H}$, are determined by the following
modification of the line \eqref{int}:
\begin{equation}\label{int2}
\nabla_{\!n}(\bO_{\mathit{kj}}\,V^j)=
\nabla_{\!k}(\bO_{\mathit{nj}}\,V^j)
\quad\Rightarrow\quad
(\nabla_{\!n}\,\bO_{\mathit{kj}}- \nabla_{\!k}\,\bO_{\mathit{nj}})\,
V^j=
\bO_{\mathit{nj}}\, W_k^j-\bO_{\mathit{kj}}\, W_n^j\,,
\end{equation}
where the tensor field
$$
W_k^j\DEF\frac{\partial V^j}{\partial X^k}
$$
can be thought of as given. Equations \eqref{int2} are a set of partial
differential equations for $\Omega(X)$'s but, thanks to function
freedom mentioned above, we may pass from  old set of variables, say
$(x,y,z,u)$, to the new one $(N,\alpha,\beta,\gamma)$ and thereby turn
these equations into ordinary differential equations in variable $N$.

\begin{theorem}\label{T6}
Denote $\mathit{\Omega}(N;\alpha,\beta,\gamma)\DEF\Omega(x,y,z,u)$ and
matrix $W=W(N;\alpha,\beta,\gamma)$:
$$
W_{\!\mathit{jk}}\DEF \left.\frac{\partial V^j}{\partial X^k}
\right|_{X=X(N;\alpha,\beta,\gamma)}\,,
$$
where $(\alpha,\beta,\gamma)$ are seen as parameters. Then all the
brackets $\Omega(X)=\mathit{\Omega}(N)$ satisfy the linear matrix
dynamical system
\begin{equation}\label{temp}
\frac{d\,\mathit{\Omega}}{dN}= W\mathit{\Omega}+
\mathit{\Omega}\,W^{\mbox{\tiny$\sss\boldsymbol{\top}$}}
\end{equation}
supplemented with the arbitrary initial condition $($bracket\/$)$
$\mathit{\Omega}(0)=\Lambda(\alpha,\beta,\gamma)$.
\end{theorem}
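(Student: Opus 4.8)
The plan is to recognize the partial differential system \eqref{int2} as the coordinate form of a single geometric statement — the invariance of the Poisson bi-vector under the flow of $V$ — and then to read off the matrix ODE simply by straightening that flow in the chart $(N;\alpha,\beta,\gamma)$.

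The key preliminary observation, and the hinge of the whole argument, is that in the first-order formalism the 2-form is \emph{exact}: from \eqref{rho} one has $\bO=d\varrho$, so $d\bO=0$ identically, irrespective of the equations of motion. Now \eqref{int2} asserts only that $\iota_V\bO=\bO_{kj}V^j\,dX^k$ is closed. To upgrade this to flow-invariance I would contract the Bianchi-type identity $\nabla_{\!j}\bO_{kn}+\nabla_{\!k}\bO_{nj}+\nabla_{\!n}\bO_{jk}=0$ (i.e. $d\bO=0$) with $V^j$, which gives $V^j\nabla_{\!j}\bO_{kn}=V^j(\nabla_{\!n}\bO_{kj}-\nabla_{\!k}\bO_{nj})$, and then substitute the right-hand side of \eqref{int2}, obtaining $V^j\nabla_{\!j}\bO_{kn}=\bO_{nj}W^j_k-\bO_{kj}W^j_n$. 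Feeding this into Cartan's formula $\mathcal{L}_V\bO=\iota_Vd\bO+d\iota_V\bO$ and using the antisymmetry of $\bO$ to cancel the $W$-terms pairwise yields $\mathcal{L}_V\bO=0$. Passing to the inverse $\Omega=\bO^{\smin1}$ (the Lie derivative commutes with inversion) turns this into $\mathcal{L}_V\Omega=0$, whose components read $V^j\partial_j\Omega^{pq}=\Omega^{jq}\partial_jV^p+\Omega^{pj}\partial_jV^q$, that is exactly $V^j\partial_j\Omega=W\Omega+\Omega\,W^{\boldsymbol{\top}}$ with $W^p_{\ j}=\partial_jV^p$.

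The second step is to straighten $V$. Since $(\alpha,\beta,\gamma)$ are integrals of motion and $N=\mathcal{N}$ obeys $\bdot{\mathcal{N}}\equiv1$, the map $X\mapsto(N;\alpha,\beta,\gamma)$ is a flow-box chart in which $V=\partial_N$; concretely $\partial X^j/\partial N=V^j$ at frozen $(\alpha,\beta,\gamma)$. Crucially one does \emph{not} transform the tensor: keeping the entries $\Omega^{pq}$ and $W^p_{\ j}$ in the original $X$-basis and only re-expressing them through $X=X(N;\alpha,\beta,\gamma)$, the operator $V^j\partial_j$ becomes the total derivative $d/dN$ taken at fixed $(\alpha,\beta,\gamma)$. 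Hence $\mathcal{L}_V\Omega=0$ becomes verbatim the asserted linear system $d\mathit\Omega/dN=W\mathit\Omega+\mathit\Omega\,W^{\boldsymbol{\top}}$. Being first order in $N$ alone, it leaves the transverse dependence on $(\alpha,\beta,\gamma)$ completely free; that data is carried by $\mathit\Omega(0)=\Lambda(\alpha,\beta,\gamma)$ on the slice $N=0$, which is precisely the function freedom \eqref{free}. One checks that the flow propagates antisymmetry (the right-hand side is antisymmetric whenever $\mathit\Omega$ is) and the Jacobi identity, so $\Lambda$ may be taken to be any bracket on the transversal.

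The main obstacle is the first step: \eqref{int2} literally says only ``$\iota_V\bO$ is closed,'' which is weaker than the flow-invariance $\mathcal{L}_V\Omega=0$ that the ODE encodes; the two coincide solely because $d\bO=0$, and the cancellation of the $W$-terms must be tracked carefully through the antisymmetry of $\bO$. Without exactness of $\bO$ the reduction to an ODE in $N$ would fail. For the degenerate members of the pencil (e.g. $\omega$ of Theorem~\ref{T5}) there is no inverse form, so I would instead derive $\mathcal{L}_V\Omega=0$ from the general fact that a Hamiltonian vector field preserves any genuine Poisson tensor, or else obtain the degenerate case by continuity as $\lambda\to0$ from the non-degenerate $\Omega$, for which the computation above applies directly.
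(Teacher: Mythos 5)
Your proof is correct and is essentially the paper's own argument in coordinate-free dress: the paper combines the antisymmetry of $\bO$ with its Jacobi (closedness) identity to rewrite \eqref{int2} as $V^j\nabla_{\!j}\bO=-\bO\,W-W^{\top}\bO$, then inverts $\bO$ and straightens the flow using $\bdot\alpha=\bdot\beta=\bdot\gamma=0$, which is precisely your Cartan-formula step $\mathcal{L}_V^{}\bO=\iota_V^{}\,d\bO+d\,\iota_V^{}\bO=0$ followed by inversion and the flow-box chart. Your extra care with the degenerate members of the pencil (where $\Omega^{-1}$ does not exist) and with propagation of antisymmetry and the Jacobi identity goes slightly beyond what the paper records, but the core route is identical.
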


\begin{proof}
With use of antisymmetry $\bO_{\mathit{kj}}=-\bO_{\mathit{jk}}$ and
Jacobi's identity
$\nabla_{\!n}\,\bO_{\mathit{kj}}+\nabla_{\!k}\,\bO_{\mathit{jn}}+
\nabla_{\!j}\,\bO_{\mathit{nk}}=0$ equations \eqref{int2} may be
rewritten as
$V^j\,\nabla_{\!j}\,\bO_{\mathit{nk}}=\bO_{\mathit{kj}}\,W^j_n-
\bO_{\mathit{nj}}\,W^j_k$. Hence $\bdot
\bO{}=-\bO\,W-W^{\mbox{\tiny$\sss\boldsymbol{\top}$}}\,\bO$ and,
subsequently, \eqref{temp} since $\bdot \alpha=\bdot \beta=\bdot
\gamma=0$ and
$\bO\:\bdot{\!\mathit{\Omega}}=-\bdot\bO\,\mathit{\Omega}$.
\end{proof}

By this means function freedom \eqref{free} with the three functions
of
three variables $\alpha=\mathcal{H}(X)$,
$\beta=\boldsymbol{J}_{\!1}(X)$, $\gamma=\boldsymbol{J}_{\!2}(X)$ is
converted into the coefficients of dynamical system \eqref{temp}
(matrix $W$) and its initial condition $\Omega(0)$. As the latter one
may  take any particular bracket; for example, the bracket $\Omega$
from Theorem~\ref{T5}.

\section{A generalization\label{Oh}} Outlined receipt of derivation of
integrals and the `linear objects' like $\mathcal{N}$ is directly
extended to more general (Halphen, Brioschi (1881)) quadratic
homogenous systems \cite{halphen0}
\begin{equation}\label{xyzdot}
\bdot x = x^2+\Xi\,,\qquad
\bdot y = y^2+\Xi\,,\qquad
\bdot z = z^2+\Xi\,,
\end{equation}
$$
\Xi\DEF \boldsymbol{a}\,(y-x)^2+\boldsymbol{b}\,(z-x)^2+
\boldsymbol{c}\,(z-y)^2\,,
$$
associated with a hypergeometric equation of the general type
\begin{equation}\label{2F1}
s\,(s-1)\,\Psi''+\big\{(a+b+1)\,s-c\big\}\,\Psi+a\,b\,\Psi=0\,,
\end{equation}
where prime stands for the $s$-derivative. Integrability of this system
(and its generalizations) in terms of associated linear equations was
considered and established independently in the 1990s by many authors:
Ablowitz et all \cite{ablowitz3}, Ohyama \cite{ohyama}, Harnad \&
MacKay \cite{harnad}; see also Refs.~\cite{chakr, abl}. Parameters
$(\boldsymbol{a},\boldsymbol{b},\boldsymbol{c})$ are computed via the
hypergeometric ones $(a,b,c)$ (correcting a typo in formula (3.6) of
Ref.~\cite{ohyama}):
$$
4\,\boldsymbol{a}=a\,c+b\,c-2\,a\,b-c\,, \qquad
4\,\boldsymbol{b}=a^2+b^2-a\,c-b\,c+c-1\,,\qquad
4\,\boldsymbol{c}=c^2+2\,a\,b-a\,c-b\,c-c
$$
and base definitions for variables $(x,y,z)$ and relations between them
and the quantities $\tau$, $s$, and $\Psi$ read as follows
$$
\tau=\frac{\tilde\Psi(s)}{\Psi(s)}\,,\qquad
\bdot s=s^c(s-1)^{a+b-c+1}\Psi^2\,,
$$
\begin{alignat*}{4}
x&=\frac12\,\frac{d}{d\tau}\!\ln \Psi^2(s-1)^{a+b-c+1}s^c&&=
\frac12\,\frac{d}{ds}\!\left\{s^c(s-1)^{a+b-c+1}\Psi^2\right\},
\\[0.3em]
y&=\frac12\,\frac{d}{d\tau}\!\ln \Psi^2(s-1)^{a+b-c+1}s^{c-2}&&=
x-s^{c-1}(s-1)^{a+b-c+1}\Psi^2\,,\\[0.3em]
z&=\frac12\,\frac{d}{d\tau}\!\ln \Psi^2(s-1)^{a+b-c-1}s^c&&=
x-s^c(s-1)^{a+b-c}\Psi^2\,.
\end{alignat*}
See works \cite{ohyama,harnad,chakr} for details. From the formulae
above it follows that
$$
s=\frac{z-x}{z-y}\,,\qquad \bdot s=(x-y)\,s\,,\qquad
\Psi={}_2F_1(a,b;c|s)\,.
$$
In the framework of these definitions we obtain system \eqref{xyzdot}
and can deduce its integrals. Indeed, passing to the general solution
$s=s\big(\frac{\alpha\,\tau+\beta}{\gamma\,\tau+\delta}\big)$, we have,
instead of \eqref{KK'},
$$
(\gamma\,\tau+\delta)^{\smin2}=
\frac{d}{d\tau}\frac{\tilde\Psi(s)}{\Psi(s)}
\sim(x-y)\,s\,\frac{\tilde\Psi'\Psi-\tilde\Psi\Psi'}{\Psi^2}\,.
$$
Hence
$$
(\gamma\,\tau+\delta)\sim\!\frac{\sqrt{s^{c-1}(s-1)^{a+b-c+1}}}
{\sqrt{x-y\,}}\cdot
{}_2F_1(a,b;c|s)\,.
$$
Take the $\tau$-derivative of this expression and make use of the fact
that derivative of a hypergeometric series is another hypergeometric
series \cite{WW}:
$$
\frac{d}{ds}\big\{{}_2F_1(a,b;c|s)\big\}=\frac{a\,b}{c}\cdot
{}_2F_1(a+1,b+1;c+1|s)\,.
$$
We thus obtain the first integral $J_1\sim\gamma$ for
Eqs.~\eqref{xyzdot}:
$$
J_1=
CA\cdot{}_2F_1\!\Big(a,b;c\Big|\Mfrac{z-x}{z-y}\Big)+
CB\cdot
{}_2F_1\!\Big(a+1,b+1;c+1\Big|\Mfrac{z-x}{z-y}\Big)\,,
$$
where
$$
A=(a+b-1)\,x-c\,y-(a+b-c+1)\,z\,,\qquad
B=2\,\frac{a\,b}{c}\,\frac{(x-y)(z-x)}{z-y}\,,
$$
$$
C=(y-x)_{\mathstrut}^{\frac12(a+b-c)}
(z-y)_{\mathstrut}^{\!-\frac12(a+b)}
(z-x)^{\frac12(c-1)}_{\mathstrut}\,.
$$
Assume now that the second (linearly independent of $\Psi$) solution to
\eqref{2F1} has no logarithmic  behavior in the vicinity of point
$s=0$; otherwise we can reorder variables $(x,y,z)$ with the help of
the linear transformation $s\mapsto 1-s$ or $s\mapsto s^{\smin1}$. If
the logarithm presents at each of the points $s=\{0,1,\infty\}$, we
fall into Proposition~\ref{T3}. This case corresponds to parameters
$(a,b,c)=\left(\frac12,\frac12,1\right)$ and is equivalent to system
\eqref{darboux} up to a simple linear transformation \cite{ohyama} of
the triples $(x,y,z)\rightleftarrows (X,Y,Z)$. Then we may take the
following form for the second solution to \eqref{2F1} \cite{WW}:
$$
\tilde\Psi=
s^{1-c}(s-1)^{c-a-b}\cdot{}_2F_1(1-a,1-b;2-c|s)\,.
$$
By repeating the arguments above we obtain the second integral for
Eqs.~\eqref{xyzdot}:
$$
J_2=
\,\tilde{\!C\,}\,\tilde{\!A\,}\cdot{}_2F_1
\!\Big(1-a,1-b;2-c\Big|\Mfrac{z-x}{z-y}\Big)
+
\,\tilde{\!C\,}\,\tilde{\!B\,}\cdot{}_2F_1
\!\Big(2-a,2-b;3-c\Big|\Mfrac{z-x}{z-y}\Big)\,,
$$
where
$$
\tilde{\!A\,}=A+2\,(z+y)\,,\qquad
\tilde{\!B}=2\,\frac{(a-1)(b-1)}{c-2}\,\frac{(x-y)(z-x)}{z-y}\,,
\qquad
\tilde{\!C}=\frac{C}{(z-y)^2}\,.
$$
This completes an integration procedure considered in
Refs.~\cite{abl,ablowitz3,chakr,harnad,ohyama}.

\section{Appendix: The Jacobi  system\label{S5.1}}
Since the late
1850's C.~Borchardt, being the Editor-in-Chief of Crelle's Journal,
began to edit and publish the manuscript material kept after Jacobi's
death in 1851. In particular, in 1857 he published calculations
\cite[p.~383--394]{jacobi} where Jacobi constructed power series
developments for his \mbox{$\theta(z|\tau)$-functions}. The power
\mbox{$\theta$-series} are of interest in their own rights but not a
less remarkable fact is that they produce the nice dynamical systems
integrable in terms of $\vartheta$-constants.

Jacobi introduces the four variables (we keep completely to Jacobi's
notation in \cite[p.~386]{jacobi})
\begin{equation}\label{A1}
A=\frac{2K}{\pi}\,,\qquad
B=\frac{2E}{\pi}-k'^2\frac{2K}{\pi}\,,\qquad
a=4\,(1-2\,k^2)\,,\qquad
b=2\,k^2\,k'^2\,,
\end{equation}
and  shows that these satisfy the dynamical system \eqref{ABab}; in
doing so Jacobi imposes the condition \eqref{ab16} which is of course
an equivalent of the relation $k^2+k'^2=1$ or, which is the same, the
$\vartheta$-identity \eqref{324}. Halphen does not mention system
\eqref{ABab} and, to all appearances, it has not received mention in
the later literature on theta-functions. Jacobi does not restrict his
consideration to variables \eqref{A1} and exhibits what is called
presently the canonical transformations, \ie, transformations of
dynamical variables preserving the form of equations. Here are his
versions of the transformations \cite[p.~387]{jacobi}:
\begin{alignat*}{5}
A&=\frac{2kK}{\pi}\,,&\quad B&=\frac1k\cdot\frac{2E}{\pi}\,,&\quad
a&=-\frac{4(1+k'{}^2)}{k^2}\,,&\quad
b&=-\frac{2\,k'^2}{k^4}\,,\\[0.3em]
A&=\frac{2k'K}{\pi}\,,&\qquad
B&=\frac{1}{k'}\!\left(\frac{2E}{\pi}-\frac{2K}{\pi}\right),&
a&=\frac{4(1+k^2)}{k'{}^2}\,,& b&=-\frac{2\,k^2}{k'^4}\,.
\end{alignat*}
Complete set of differential relations between these and auxiliary
variables $\{k,k',K,E\}$  was written down by Jacobi earlier
\cite[p.~176--177]{jacobi}. As in the previous differential systems
\eqref{darboux} and \eqref{g2g3}, dynamical variables $\{A,B,a,b\}$ are
expressed  through the $\eta,\vartheta$-series rationally; see formulae
\eqref{point1}.

We also note that system \eqref{ABab} is notable for its homogenous
monomial structure. Jacobi exploits  intensively this fact when
deriving the power $\theta$-series; the pages 388--391 of his Werke
\cite{jacobi} contain a lot of useful formulae along these lines.
System \eqref{ABab} is not the only dynamical system that was derived
by Jacobi in connection with \mbox{$\theta$-functions}; see also
\cite[p.~173--190]{jacobi}. Jacobi did not pose a question about
integration of \eqref{ABab} as  ODEs, however earlier, in 1847, he
obtained a complete integral for the 3rd order differential equation
\begin{equation}\label{C}
C^4(\ln C^3C_{\tau\tau})_\tau^2=16\,C^3C_{\tau\tau}-\pi^2
\end{equation}
satisfied by each of the $\vartheta$-constants:
$C=\vartheta(\tau)^{\smin2}$ (Jacobi's notation \cite[p.~179]{jacobi}).
On the other hand, this equation must be a certain consequence of
equations \eqref{var} whose solutions are not only the
$\vartheta,\,\eta$-series. Invoking integral \eqref{Ivar}, we conclude
that Jacobi's equation \eqref{C} is indeed the consequence of equations
\eqref{var} with the proviso that $U=0$. It is also clear that this
condition is a necessary one  in order that the Darboux--Halphen system
\eqref{darboux} be a consequence of symmetrical identities \eqref{var}
as well.

\thebibliography{99}

\bibitem{ablowitz} \textsc{Ablowitz,~M.~J.,  Chakravarty,~S. \&
Takhtajan,~L.~A.} \textit{A self-dual Yang--Mills hierarchy and its
reduction to integrable systems in \mbox{$1+1$} and\/ \mbox{$2+1$}
dimensions}. Comm.\ Math.\ Phys. (1993), {\bf158}, 289--314.

\bibitem{abl} \textsc{Ablowitz,~M.~J., Chakravarty,~S.
\& Halburd,~R.} \textit{The Generalized Chazy Equation from the
Self-Duality Equations}. Stud.\ Appl.\ Math. (1999), {\bf103}(1),
75--88.

\bibitem{ablowitz3} \textsc{Ablowitz,~M.~J.,  Chakravarty,~S. \&
Halburd,~R.} \textit{On Painlev\'e and Darboux--Halphen-Type
Equations}. In: \cite{conte}, p.~573--589.

\bibitem{ablowitz2} \textsc{Ablowitz,~M.~J., Chakravarty,~S. \&
Hahn,~H.} \textit{Integrable systems and modular forms of level $2$}.
J.~Phys.\ {\bf A}: Math.\ Gen. (2006), {\bf39}(50),  15341--15353.

\bibitem{hitchin} \textsc{Atiyah,~M. \& Hitchin,~N.}
\textit{The Geometry
and Dynamics of Magnetic Monopoles}. Princeton University Press:
Princeton (1988).

\bibitem{br2}\textsc{Brezhnev,~Yu.~V.} \textit{Non-canonical extension
of $\theta$-functions and modular integrability of
$\vartheta$-constants}. {\tt http:/\!/arXiv.org/abs/1011.1643}.

\bibitem{ablowitz0} \textsc{Chakravarty,~S., Ablowitz,~M.~J \&
Clarkson, P.~A.} \textit{Reductions of self-dual Yang--Mills fields and
classical systems}. Phys.\ Rev.\ Lett. (1990), {\bf65}(9), 1085--1087.

\bibitem{chakr} \textsc{Chakravarty,~S. \& Halburd,~R.} \textit{First
integrals of a generalized Darboux--Halphen system}. J.~Math.\ Phys.
(2003), {\bf 44}(4), 1751--1762.

\bibitem{chud} \textsc{Chudnovsky,~D.~V. \& Chudnovsky,~G.~V.}
\textit{Note on Eisenstein's system of differential equations: an
example of ``exactly solvable but not completely integrable system of
differential equations''}. In: Lecture Notes in Pure and Applied Math.
(1984), {\bf92}, 99--115. D.~V.~Chudnovsky \& G.~V.~Chudnovsky (eds.),
Dekker.

\bibitem{conte} \textsc{Conte,~R.} (Ed.)
\textit{The Painlev\'e property.
One century later}. CRM Series in Mathematical Physics (1999).
Springer--Verlag: New York  (1999). \mbox{\sc Conte, R.} \textit{The
Painlev\'e Approach to Nonlinear Ordinary Differential Equations,}
77--180.

\bibitem{darboux} \textsc{Darboux,~G.} \textit{Sur la th\'eorie des
coordonn\'ees curvilignes et les syst\`emes orthogonaux}. Annales
scientifiques de l’\'Ecole Normale Sup\'erieure. $2^{\mathrm{e}}$
S\'erie, (1878), {\bf VII}, 101--150.


\bibitem{eisenstein} \textsc{Eisenstein,~G.} \textit{Mathematische
Abhandlungen}. Georg Olms: Hildesheim (1967).

\bibitem{guha} \textsc{Guha,~P. \& Mayer,~D.} \textit{Ramanujan
Eisenstein Series, Fa\'a di Bruno Polynomials and Integrable Systems}.
Max Planck Institute: Preprint (2007), {\bf87}.

\bibitem{nutku} \textsc{G\"umral,~H. \& Nutku,~Y.} \textit{Poisson
structure of dynamical systems with three degrees of freedom}.
J.~Math.\ Phys. (1993), {\bf34}(12), 5691--5723.

\bibitem{halphen0} \textsc{Halphen,~G.-H.} \textit{Sur certains
syst\`emes d’\'equations diff\'erentielles}. Compt.\ Rend.\ Acad. Sci.
Paris (1881), {\bf 92}, 1404--1406.

\bibitem{halphen} \textsc{Halphen,~G.-H.}
\textit{Trait\'e des Fonctions
Elliptiques et de Leurs Applications}. \textit{\textbf{I}}.
Gauthier--Villars: Paris (1886).

\bibitem{harnad} \textsc{Harnad,~J. \& McKay,~J.} \textit{Modular
solutions to equations of generalized Halphen type}. Proc.\ Royal Soc.\
London {\bf A} (2000), {\bf 456}(1994), 261--294.

\bibitem{jacobi} \textsc{Jacobi,~C.~G.~J.} \textit{Gesammelte Werke}.
\textit{\textbf{II}}. Verlag von G.~Reimer: Berlin (1882).

\bibitem{kiritsis} \textsc{Kiritsis,~L.} \textit{Introduction to
superstring theory}. Leuven Notes in Mathematical and Theoretical
Physics. Cornell University Press (1998).

\bibitem{magri} \textsc{Magri,~F.} \textit{A simple model of the
integrable Hamiltonian equation}. J.~Math.\ Phys. (1978), {\bf19}(5),
1156--1162.

\bibitem{maier} \textsc{Maier,~R.~S.} \textit{Nonlinear differential
equations satisfied by certain classical modular forms}. Manuscripta
Mathematica (2011), {\bf134}(1/2), 1--42.

\bibitem{Nambu} \textsc{Nambu,~Y.}
\textit{Generalized Hamiltonian Dynamics}. Phys.\ Rev.\ \textbf{D}
(1973), \textbf{7}(8), 2405--2412.

\bibitem{ohyama} \textsc{Ohyama,~Y.} \textit{Systems of nonlinear
differential equations related to second order linear equations}. Osaka
J.~ Math. (1996), {\bf 33}(4), 927--949.

\bibitem{rama} \textsc{Ramamani,~V.} \textit{On some identities
conjectured by Srinivasa Ramanujan in his lithographed notes connected
with partition theory and elliptic modular functions---their
proofs---interconnection with various other topics in the theory of
numbers and some generalizations}. PhD-Thesis, University of Mysore:
Mysore (1970).

\bibitem{ramanujan} \textsc{Ramanujan,~S.} \textit{On certain
arithmetical functions}. Trans.\ Cambridge Phil.\ Soc. (1916),
{\bf22}(9), 159--184; \textit{Collected papers}. Cambridge University
Press (1927).

\bibitem{Takh} \textsc{Takhtajan,~L.}
\textit{On foundation of the generalized Nambu mechanics}.
Comm.\ Math.\ Phys.  (1994), \textbf{160}, 295--315.

\bibitem{WW} \textsc{Whittaker,~E.~T. \& Watson,~G.~N.} \textit{A
Course of Modern Analysis: An Introduction to the General Theory of
Infinite Processes and of Analytic Functions, with an Account of the
Principal Transcendental Functions}. Cambridge University Press:
Cambridge (1996).

\end{document}